\newtheorem{theorem}{Theorem}[section]
\newtheorem*{theorem*}{Theorem}
\newtheorem{lemma}[theorem]{Lemma}
\newtheorem{rem}[theorem]{Remark}
\newtheorem{proposition}[theorem]{Proposition}
\newtheorem*{fact*}{Fact}
\newtheorem{claim}{Claim}
\theoremstyle{definition}
\newcommand{\ee}{\varepsilon}
\newcommand{\nn}{\mathbb{N}}
\newcommand{\rr}{\mathbb{R}}
\begin{document}

\title{Lipschitz subtype}
\author{R.M. Causey}

\begin{abstract} We give necessary and sufficient conditions for a Lipschitz map, or more generally a uniformly Lipschitz family of maps, to factor the Hamming cubes. This is an extension to Lipschitz maps of a particular spatial result of Bourgain, Milman, and Wolfson \cite{BMW}.

\end{abstract}
\maketitle

\section{Introduction}

In $1969$, M. Ribe \cite{Ribe} proved that two Banach spaces which are uniformly homeomorphic must be crudely finitely representable in each other. Since then, the Ribe program has attracted significant attention (see \cite{Naor} for a survey on the Ribe program),  with the goal of providing purely metric characterizations of local properties of Banach spaces.   An important result in this area is that of Bourgain, Milman, and Wolfson, who defined one notion of metric type $p$ and proved that any family of metric spaces with no non-trivial type must contain almost isometric copies of the Hamming cubes. Another goal within the Ribe program is to find, for a given class of important linear operators between Banach spaces, natural metric analogues within the class of Lipschitz maps between metric spaces (see \cite{ChavezDominguez}\cite{FJ}, \cite{JS}).  One such class is the class of super-Rosenthal operators, for which Beauzamy \cite{Beauzamy} gave a linear characterization in terms of a sequence of subtype constants (we discuss the notion of subtype in Section $2$).    The goal of this work is to undertake the process of proving the Lipschitz analogue of Beauzamy's linear result for the super-Rosenthal operators. We define different notions of subtype constants for a Lipschitz map (or more generally, a uniformly Lipschitz collection of maps) between  Banach spaces, which are the analogues of linear subtype constants appearing in the literature in the aforementioned work of Beauzamy and the work of Hinrichs \cite{Hinrichs}.   We prove the non-linear analogues of the results found in the work of Beauzamy and Hinrichs, in that if the subtype constants of a uniformly Lipschitz family of maps exhibit the asymptotically worst possible behavior, then the Lipschitz maps preserve copies of the Hamming cubes. Our subtype constants are based on the Bourgain, Milman, Wolfson notion of metric type. We next make these descriptions precise, and then state the main result.

We agree to the convention that $\frac{0}{0}=0$. Given a map $g:(U, d_U)\to (V, d_V)$ between metric spaces, we let $$\text{Lip}(g)=\sup_{x,y\in U}\frac{d_V(g(x), g(y))}{d_U(x,y)}.$$  For a map $g:(U, d_U)\to (V, d_V)$, we let $\text{dist}(g)=\infty$ if $g$ is not injective, and otherwise we let $\text{dist}(g)=\text{Lip}(g)\text{Lip}(g^{-1})$, where $g^{-1}$ is understood to be defined on $g(U)$.

We let $2^n=\{\pm 1\}^n$ be the (vertex set of the) Hamming cube.  Given $\ee\in 2^n$, we denote the coordinates of $\ee$ by $\ee(1), \ee(2), \ldots$. We endow $2^n$ with the normalized graph metric $$\partial_n (\ee, \delta)=\frac{1}{n}|\{i: \ee(i)\neq \delta(i)\}|.$$    When no confusion can arise, we will suppress the subscript $n$ and just write $\partial$. We also endow $2^n$ with the uniform probability measure $\mathbb{P}_n$, also suppressing the subscript when no confusion can arise. Given $1\leqslant i\leqslant n$, we let $d_i$ denote the function on $2^n$ which changes the $i^{th}$ coordinate and leaves the other coordinates unchanged.

To avoid cumbersome notation, if $(X, d_X)$, $(Y, d_Y)$ are metric spaces and $f:2^n\to X$, $F:X\to Y$ are functions, we let $\varrho^f_X$, $\varrho^f_Y$, respectively, denote the pseudometrics on $2^n$ given by $$\varrho^f_X(\ee, \delta)= d_X(f(\ee), f(\delta))$$ and $$\varrho^f_Y(\ee, \delta)=d_Y(F\circ f(\ee), F\circ f(\delta)).$$

Now suppose we have $\lambda>0$ fixed and a collection $\mathcal{F}$ of $\lambda$-Lipschitz functions between (possibly different) metric spaces.    For $n\in\nn$, we let $a_n(\mathcal{F})$ denote the infimum of those $a>0$ such that for each $F:X\to Y\in \mathcal{F}$ and $f:2^n\to X$, $$\mathbb{E}\varrho_Y^f(\ee, -\ee) \leqslant a\text{Lip}(f:2^n\to X).$$  Note that $a\leqslant \lambda$.     These are the Lipschitz analogues of the linear quantities appearing in \cite{Beauzamy}.  We note that there appears a factor of $n^{-1}$ on the expectation. The reason is because this constant $n^{-1}$ has been subsumed by $\text{Lip}(g)$ and our convention of using the normalized graph metric on $2^n$.

For $1<p<\infty$, let $b_{p,n}(\mathcal{F})$ denote the infimum of those $b>0$ such that for each $F:X\to Y\in \mathcal{F}$ and $f:2^n\to X$,  $$\mathbb{E}\varrho_Y^f(\ee, -\ee)^p \leqslant b^p n^{p-1}\sum_{i=1}^n \mathbb{E}\varrho_X^f(\ee, d_i\ee)^p.$$  Let us note that by combining the triangle and H\"{o}lder inequalities, $b_{p,n}(\mathcal{F})\leqslant \lambda$.    In the case $p=2$, these are the Lipschitz analogues of the linear quantities appearing in \cite{Hinrichs}, as well as the generalization to maps of the metric type $1$ constant as defined by Bourgain, Milman, and Wolfson.

Let us say that $\mathcal{F}$ \emph{crudely factors the Hamming cubes} provided that there exist constants $c,D>0$ such that for each $n\in\nn$, there exist $F:X\to Y\in \mathcal{F}$ and $f:2^n\to X$ and constants $a,b>0$ such that for each $\ee, \delta\in 2^n$, $$\frac{a}{D} \partial(\ee, \delta) \leqslant d_X(f(\ee), f(\delta)) \leqslant a D \partial(\ee, \delta),$$ $$\frac{b}{D}\partial (\ee, \delta) \leqslant d_Y(F\circ f(\ee), F\circ f(\delta)) \leqslant bD \partial (\ee, \delta),$$ and $b\geqslant ac$. An important feature of this definition is that the scaling factors $a,b$ be uniformly equivalent (that is, $ac\leqslant b\leqslant a\lambda D^2$). Let us say that $\mathcal{F}$ \emph{factors the Hamming cubes} provided that there exists a  constant $c>0$ such that for each $D>1$ and each $n\in\nn$, there exist $F:X\to Y\in \mathcal{F}$ and $f:2^n\to X$ and constants $a,b$ such that for each $\ee, \delta\in 2^n$, $$\frac{a}{D} \partial(\ee, \delta) \leqslant d_X(f(\ee), f(\delta)) \leqslant a D \partial(\ee, \delta),$$ $$\frac{b}{D}\partial (\ee, \delta) \leqslant d_Y(F\circ f(\ee), F\circ f(\delta)) \leqslant bD \partial (\ee, \delta),$$ and $b\geqslant ac$.

We now present the main theorem.

\begin{theorem*} The following are equivalent: \begin{enumerate}[(i)]\item $\mathcal{F}$ factors the Hamming cubes. \item $\mathcal{F}$ crudely factors the Hamming cubes.  \item $\lim\sup_n a_n(\mathcal{F})>0$. \item For each $1<p<\infty$, $\lim\sup_n b_{p,n}(\mathcal{F})>0$. \item For some $1<p<\infty$, $\lim\sup_n b_{p,n}(\mathcal{F})>0$.  \end{enumerate}

\label{main theorem}

\end{theorem*}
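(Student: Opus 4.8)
The plan is to prove the cycle of implications $(i) \Rightarrow (ii) \Rightarrow (iii) \Rightarrow (iv) \Rightarrow (v) \Rightarrow (i)$, with essentially all of the work concentrated in the last implication. The implications $(i) \Rightarrow (ii)$ and $(iv) \Rightarrow (v)$ are immediate: applying the hypothesis of $(i)$ with, say, $D = 2$ produces data witnessing $(ii)$ with constants $c$ and $2$, and $(v)$ is the specialization of $(iv)$ to any single $p$.

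For $(iii) \Rightarrow (iv)$ I would first record the pointwise inequality $a_n(\mathcal{F}) \leqslant b_{p,n}(\mathcal{F})$, valid for every $n$ and every $p \in (1,\infty)$. Indeed, if $b$ witnesses the defining inequality for $b_{p,n}(\mathcal{F})$ and $f : 2^n \to X$ with $F : X \to Y \in \mathcal{F}$, then $\varrho_X^f(\ee, d_i\ee) \leqslant \text{Lip}(f:2^n\to X)\,\partial(\ee, d_i\ee) = n^{-1}\text{Lip}(f:2^n\to X)$ gives $\sum_{i=1}^n \mathbb{E}\varrho_X^f(\ee, d_i\ee)^p \leqslant n^{1-p}\text{Lip}(f:2^n\to X)^p$, whence $\mathbb{E}\varrho_Y^f(\ee, -\ee)^p \leqslant b^p\,\text{Lip}(f:2^n\to X)^p$, and Jensen's inequality yields $\mathbb{E}\varrho_Y^f(\ee,-\ee) \leqslant b\,\text{Lip}(f:2^n\to X)$; thus $b$ witnesses the defining inequality for $a_n(\mathcal{F})$. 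Passing to $\limsup$ gives $(iv)$. For $(ii) \Rightarrow (iii)$, from crude factoring with constants $c$ and $D$, the witnessing $F, f$ for a given $n$ satisfy $\text{Lip}(f:2^n\to X) \leqslant aD$ and $\mathbb{E}\varrho_Y^f(\ee,-\ee) \geqslant (b/D)\partial(\ee,-\ee) = b/D \geqslant ac/D$, so any admissible constant for $a_n(\mathcal{F})$ is at least $(ac/D)/(aD) = c/D^2$; hence $a_n(\mathcal{F}) \geqslant c/D^2 > 0$ for all $n$.

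The substantive implication is $(v) \Rightarrow (i)$, the Lipschitz, collection-valued analogue of the Bourgain--Milman--Wolfson extraction of almost isometric Hamming cubes. Fix $p$ with $\limsup_n b_{p,n}(\mathcal{F}) = 2\theta > 0$ and an infinite set $S$ of integers for which there exist $F_n : X_n \to Y_n \in \mathcal{F}$ and $f_n : 2^n \to X_n$ with $\mathbb{E}\varrho_{Y_n}^{f_n}(\ee,-\ee)^p \geqslant \theta^p n^{p-1}\sum_{i=1}^n \mathbb{E}\varrho_{X_n}^{f_n}(\ee, d_i\ee)^p$. Given a target dimension $m$ and a distortion $D > 1$, I would pick $n \in S$ an enormous multiple of $m$ and work with random \emph{affine sub-cube embeddings} $\sigma : 2^m \to 2^n$: partition $\{1,\dots,n\}$ uniformly at random into $m$ blocks of size $n/m$ and flip, blockwise, the signs dictated by the argument. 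Such a $\sigma$ is an \emph{exact} similarity, $\partial_n(\sigma\eta, \sigma\eta') = \partial_m(\eta, \eta')$, so automatically $\text{Lip}(f_n\circ\sigma:2^m\to X_n) \leqslant \text{Lip}(f_n:2^n\to X_n)$ and $\text{Lip}(F_n\circ f_n\circ\sigma:2^m\to Y_n) \leqslant \lambda\,\text{Lip}(f_n\circ\sigma:2^m\to X_n)$; the upper distortion bounds in the definition of factoring are therefore free, and what must be produced are the matching \emph{lower} bounds for both $f_n\circ\sigma$ and $F_n\circ f_n\circ\sigma$, together with the comparability $b \geqslant ac$. The key point is that the type-failure inequality, applied along uniformly random monotone coordinate paths in $2^n$ and averaged over the choice of partition, forces the expected antipodal spread of $F_n\circ f_n\circ\sigma$ in $Y_n$ to be bounded below by a fixed multiple $c' = c'(\theta,\lambda)$ of $\text{Lip}(f_n)$ — so that $F_n\circ f_n$ cannot collapse relative to the edge lengths of $f_n$, which yields both the non-degeneracy and the scale comparison $b \geqslant ac$ with $c$ depending only on $\theta$ and $\lambda$.

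The main obstacle, exactly as in the original spatial argument, is to upgrade these \emph{averaged} lower bounds — on antipodal pairs only, and only in expectation over $\sigma$ — into a \emph{single} affine sub-cube of a dimension still at least $m$ on which \emph{all} pairwise distances are simultaneously within the factor $D$ of $\partial$, for \emph{both} $f_n\circ\sigma$ and $F_n\circ f_n\circ\sigma$. This requires an iterated pigeonhole over dyadic scales, passing through a nested sequence of affine sub-cubes while at each stage using (the transfer to random sub-cubes of) the type-failure inequality to control distances at the corresponding scale, and then extracting a common sub-cube on which every scale behaves; the number of scales one can afford, and hence the attainable distortion $D$, is what forces $n$ to be taken so large. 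That $\mathcal{F}$ is a family rather than a single space costs nothing beyond notation, since at each dimension one is free to pick a fresh member of $\mathcal{F}$. With $(v) \Rightarrow (i)$ established the cycle closes and all five conditions are equivalent.
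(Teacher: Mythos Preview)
Your treatment of the easy implications $(i)\Rightarrow(ii)\Rightarrow(iii)\Rightarrow(iv)\Rightarrow(v)$ is correct and matches the paper. The gap is in $(v)\Rightarrow(i)$, and it is not merely a matter of missing details.

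You correctly identify that the upper distortion bounds for $f\circ\sigma$ and $F\circ f\circ\sigma$ come for free from $\sigma$ being an isometry, and that the content is in the lower bounds together with the scale comparison $b\geqslant ac$. But your argument addresses only the $Y$-side: you argue that the type-failure inequality forces $F\circ f\circ\sigma$ not to collapse relative to $\text{Lip}(f)$. This says nothing about the distortion of $f\circ\sigma$ in $X$. Low distortion of $F\circ f\circ\sigma$ does not imply low distortion of $f\circ\sigma$, since $F$ could compress some directions and expand others; the composition can be a near-isometry while the first factor is badly distorted. So ``extracting a common sub-cube on which every scale behaves'' must be done \emph{simultaneously} for two independent pseudometrics $\varrho_X^f$ and $\varrho_Y^f$, plus the cross-inequality linking them, and your nested pigeonhole sketch gives no mechanism for this: zooming in to fix the $Y$-behavior can destroy any $X$-side control you had, and vice versa.

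The paper's route is genuinely different from a nested descent. It sets up all scales at once via an $(l,\ldots,l,l_{d+1})$ interval tree, and for each interval $I$ records the $L_p$-averaged diagonal and edge quantities $r_I$ (in $Y$) and $s_I$ (in $X$). A sequence of rigidity lemmas (Lemmas~\ref{flat}--\ref{YtoX}) shows that near-extremality of the global inequality forces, at most levels and for most intervals, near-extremality of \emph{all three} of $r_I^p\approx l^{p-1}\sum_{J^-=I}r_J^p$, $s_I^p\approx l^{p-1}\sum_{J^-=I}s_J^p$, and $r_I^p\gtrsim\Theta^p l^{p-1}\sum_{J^-=I}s_J^p$; a counting argument then finds a single interval $I$ where all three hold. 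Finally, these are statements about expectations over $2^L$, and Lemma~\ref{expect} is a separate probabilistic pigeonhole that locates one $\ee_0\in 2^L$ where the three pointwise inequalities hold together; only then does one invoke the BMW one-shot extraction (Theorem~\ref{BMW}) in both $X$ and $Y$. Note also that the argument requires $b_{p,l}(\mathcal{F})\leqslant(1+\nu)^{1/p}\Theta$ for \emph{all} large $l$, not just near-extremality at the one scale $n$ you select; your proposal uses only the latter.
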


It is obvious that $(i)\Rightarrow (ii)\Rightarrow (iii)\Rightarrow (iv)\Rightarrow (v)$. To see why $(iii)\Rightarrow (iv)$, note that for $F:X\to Y\in \mathcal{F}$ and $f:2^n\to X$, since $\varrho_X^f(\ee, d_i\ee)\leqslant \text{Lip}(f)/n$ for each $\ee\in 2^n$ and $1\leqslant i\leqslant n$,  $$\mathbb{E}\varrho_Y^f(\ee, -\ee) \leqslant \Bigl[\mathbb{E}\varrho_Y^f(\ee, -\ee)^p\Bigr]^{1/p}\leqslant b_{p,n}(\mathcal{F})\Bigl[n^{p-1}\sum_{i=1}^n \varrho_X^f(\ee, d_i\ee)^p\Bigr]^{1/p} \leqslant b_{p,n}(\mathcal{F})\text{Lip}(f),$$ so $a_n(\mathcal{F})\leqslant b_{p,n}(\mathcal{F})$.     Thus the main part  of this work is concerned with proving the implication $(v)\Rightarrow (i)$. 

We note that if $l/2\leqslant k\leqslant l$, then $b_{p,k}(\mathcal{F})\leqslant 2^{1-1/p}b_{p,l}(\mathcal{F})$. Indeed, for $F:X\to Y\in \mathcal{F}$ and $f:2^k\to X$, we can extend  $f:2^k\to X$ to a function $g:2^l\to X$ by $g(\ee)=f(\ee(1) ,\ldots, \ee(k))$.   Then $$\mathbb{E}_{2^k}\varrho^f_Y(\ee, -\ee)^p=\mathbb{E}_{2^l}\varrho_Y^g(\ee, -\ee)^p\leqslant b_{p,l}(\mathcal{F})^pl^{p-1}\sum_{i=1}^l \mathbb{E}_{2^l}\varrho_X^g(\ee, d_i\ee)^p\leqslant 2^{p-1}b_{p,l}(\mathcal{F})^pk^{p-1}\sum_{i=1}^k \mathbb{E}_{2^k}\varrho_X^f(\ee, d_i\ee)^p.$$

Applying this to  $k\in\nn$ and $l=2^{\lceil \log_2(k)\rceil}$, we deduce that $\lim\sup_n b_{p,n}(\mathcal{F})>0$ if and only if $\lim\sup_n b_{p,2^n}(\mathcal{F})>0$.  Thus our goal, completed in the fourth section of this work, will be to show that if for some $1<p<\infty$, $\lim\sup_n b_{p,2^n}(\mathcal{F})>0$, then $\mathcal{F}$ factors the Hamming cubes.  In the fifth section, we use concentration of measure to provide a quantitatively sharp proof that $(iii)\Rightarrow (i)$.

We note that the definition of our quantities $b_{p,n}(\mathcal{F})$ are reminiscent of metric type as defined by Bourgain, Milman, and Wolfson in \cite{BMW}. One may also ask about Enflo's \cite{Enflo} definition of non-linear type. In the subtype regime, however, the two notions coincide. We give the details of this in the next section.

The author wishes to thank B. Randrianantoanina for making him aware of the coarse differentiation method of Eskin, Fisher, and Whyte.

\section{Spatial versus operator results; Subtype}

We first recall a result implicitly shown in \cite{BMW} in the particular case $p=2$. The general case $1<p<\infty$ follows by substituting their Fact $2.5$ with our Lemma \ref{flat}. 

\begin{theorem}\cite[Theorem $2.6$]{BMW} For $1<p<\infty$, $l\in\nn$, and $D>1$, there exists a constant $0<a<1$ such that if $(Z, d_Z)$ is a metric space and $h:2^l\to Z$ is a function such that $$\mathbb{E}d_Z(h(\ee), h(-\ee))^p > (1-a) l^{p-1}\sum_{i=1}^l \mathbb{E}d_Z(h(\ee), h(d_i\ee))^p$$ and if $$t^p=\frac{1}{l}\sum_{i=1}^l \mathbb{E} d_Z(h(\ee), h(d_i\ee))^p,$$ then for any $\ee_1, \ee_2\in 2^l$, $$\frac{1}{D}\partial (\ee_1, \ee_2)\leqslant \frac{d_Z(h(\ee_1), h(\ee_2))}{t}\leqslant D\partial (\ee_1, \ee_2).$$  

\label{BMW}

\end{theorem}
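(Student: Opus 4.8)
The plan is to follow the proof of Bourgain, Milman, and Wolfson for the case $p=2$, substituting our Lemma \ref{flat} for their Fact $2.5$ at the single point where the inner product structure of $L^2$ is used; the only genuinely new ingredient is therefore a quantitative stability estimate for the power-mean and telescoping inequalities that is valid for every $1<p<\infty$.

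Recall the elementary proof of the metric type-$p$ inequality. Fix $\ee\in 2^l$ and an ordering $\pi$ of $\{1,\dots,l\}$, and let $\ee^{\pi}_j$ be obtained from $\ee$ by flipping the coordinates $\pi(1),\dots,\pi(j)$, so that $\ee^{\pi}_0=\ee$ and $\ee^{\pi}_l=-\ee$. Writing $a_j(\ee,\pi)=d_Z(h(\ee^{\pi}_{j-1}),h(\ee^{\pi}_j))$, the triangle inequality along this path followed by the power-mean inequality gives $d_Z(h(\ee),h(-\ee))^p\leqslant\bigl(\sum_{j=1}^l a_j(\ee,\pi)\bigr)^p\leqslant l^{p-1}\sum_{j=1}^l a_j(\ee,\pi)^p$. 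Averaging over $\ee$ and over the $l!$ orderings $\pi$, each $\mathbb{E}\,a_j(\ee,\pi)^p$ equals $\tfrac1l\sum_{i=1}^l\mathbb{E}\,d_Z(h(\ee),h(d_i\ee))^p$, so the average of the right-hand side is $l^{p-1}\sum_i\mathbb{E}\,d_Z(h(\ee),h(d_i\ee))^p=l^p t^p$. Thus the hypothesis says precisely that the pointwise-nonnegative defect
$$\Delta(\ee,\pi)\ :=\ l^{p-1}\sum_{j=1}^l a_j(\ee,\pi)^p\ -\ d_Z(h(\ee),h(-\ee))^p$$
has $\mathbb{E}\,\Delta<a\,l^p t^p$, and the same bound then holds separately for its power-mean part $l^{p-1}\sum_j a_j^p-(\sum_j a_j)^p$ and its telescoping part $(\sum_j a_j)^p-d_Z(h(\ee),h(-\ee))^p$, both of which are nonnegative.

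The crux is to convert this single averaged statement into uniform metric information, and this is where Lemma \ref{flat} enters. Since $l$ is fixed, a given coordinate edge $\{\sigma,d_i\sigma\}$, and likewise a given antipodal pair $\{\sigma,-\sigma\}$, is traversed by a fixed positive proportion of the pairs $(\ee,\pi)$; hence, choosing $a$ small enough — in terms of $p,l,D$ only, which the theorem permits — and applying Markov's inequality, every edge and every antipodal pair is traversed by \emph{some} pair $(\ee,\pi)$ with $\Delta(\ee,\pi)$ arbitrarily small. For such a pair, Lemma \ref{flat}, the $L^p$-substitute for the Bourgain--Milman--Wolfson stability fact, shows that near-equality in the power-mean step forces $a_1(\ee,\pi),\dots,a_l(\ee,\pi)$ to be mutually within a factor $1+\eta$, while near-equality in the telescoping step forces $d_Z(h(\ee),h(-\ee))\geqslant(1-\eta)\sum_j a_j(\ee,\pi)$, where $\eta=\eta(a)\to 0$ as $a\to 0$. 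Using that two good chains with the same endpoints must have the same common increment-scale, together with the identity $\mathbb{E}_{\ee,\pi}\,a_j(\ee,\pi)^p=t^p$, I would then propagate these estimates across the cube to conclude that \emph{every} coordinate edge has $d_Z$-length in $[(1-\eta)t,(1+\eta)t]$ and \emph{every} antipodal pair has $d_Z$-distance at least $(1-\eta)^2 l t$. This propagation — showing that the increment scale is essentially constant over all of $2^l$ — is exactly the step that needs a flatness lemma rather than a bare Markov argument, and it is the place where I expect the real work to lie, particularly in the range $1<p<2$ where no orthogonality is available.

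Granting the uniform edge and antipodal estimates, the conclusion is immediate. If $\ee_1,\ee_2$ differ in exactly $k$ coordinates, then flipping those coordinates one at a time and summing the edge bound yields $d_Z(h(\ee_1),h(\ee_2))\leqslant k(1+\eta)t$, which is at most $D\,\partial(\ee_1,\ee_2)\,t$ once $1+\eta\leqslant D$. For the lower bound, $\ee_2$ and $-\ee_1$ differ in the complementary $l-k$ coordinates, so the upper bound gives $d_Z(h(\ee_2),h(-\ee_1))\leqslant(l-k)(1+\eta)t$, whence
$$d_Z(h(\ee_1),h(\ee_2))\ \geqslant\ d_Z(h(\ee_1),h(-\ee_1))-d_Z(h(-\ee_1),h(\ee_2))\ \geqslant\ (1-\eta)^2 l t-(l-k)(1+\eta)t\ \geqslant\ k(1-Cl\eta)\,t$$
for an absolute constant $C$, using $k\geqslant 1$. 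Since $\eta\to 0$ as $a\to 0$, a final choice of $a$ small in terms of $p,l,D$ makes $1-Cl\eta\geqslant 1/D$, giving $d_Z(h(\ee_1),h(\ee_2))\geqslant D^{-1}\partial(\ee_1,\ee_2)\,t$, as required. (Here I have written $\partial$ as the unnormalized Hamming distance $k$ so as to match the scaling of $t$; the normalized metric differs only by the global factor $l$.)
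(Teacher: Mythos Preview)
The paper does not give its own proof of this theorem: it is stated as a citation of \cite[Theorem~2.6]{BMW}, with the one-line remark that the case of general $1<p<\infty$ ``follows by substituting their Fact~2.5 with our Lemma~\ref{flat}.'' Your plan is exactly this substitution, so at the level of approach you are doing precisely what the paper prescribes.

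Your sketch is sound in outline, and the decomposition of the averaged defect into its power-mean part (handled by Lemma~\ref{flat}) and its telescoping part is the right way to organise the argument. The one place that needs more than you have written is the propagation step, which you flag yourself: from ``each edge lies on some good chain'' you must pass to ``the common increment scale on every good chain is close to $t$.'' The mechanism you hint at --- two good chains sharing an edge have the same increment scale, so the scale is constant on connected components --- is correct, but you still need to anchor that constant to $t$. This comes from a second application of Lemma~\ref{flat} (or just Markov) to the vector of antipodal distances $\bigl(d_Z(h(\ee),h(-\ee))\bigr)_{\ee}$: near-equality in the global inequality forces these to be nearly equal to one another and hence, via the identity $\mathbb{E}_{\ee,\pi}a_j(\ee,\pi)^p=t^p$, nearly equal to $lt$. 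With $l$ fixed and $a$ at our disposal, this closes the gap. Your final paragraph (upper bound by summing edges, lower bound by the reverse triangle inequality from the antipode) is exactly how BMW finish, and your parenthetical about the normalisation of $\partial$ is well taken.
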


With the preceding remarkable result, in the case that $\mathcal{F}$ is a collection of identity operators (that is, in the spatial case), it is easy to complete the main theorem. This is because the function $n\mapsto b_{p,n}(\mathcal{F})$ is submultiplicative in the spatial case.  From this it follows that either $b_{p,n}(\mathcal{F})=1$ (worst possible value) for all $n\in\nn$, in which case we immediately finish by Theorem \ref{BMW}, or $b_{p,n}(\mathcal{F})\underset{n}{\to}0$.  But this method does not apply to the map case because of the lack of submultiplicativity of $n\mapsto b_{p,n}(\mathcal{F})$ in the non-spatial case.    

More generally, one is often interested in a sequence of composition submultiplicative seminorms $(T_n)_{n=1}^\infty$ defined on the class of bounded, linear operators between Banach spaces (such as Rademacher or gaussian  type $p$ \cite{Hinrichs},  Haar or marginale  type $p$ \cite{Wenzel},  or asymptotic notions of Rademacher or  basic type $p$ \cite{CDK}).  By ``composition submultiplicative,''  we mean that for any pair of operators $A,B$ such that the composition $AB$ is defined, $T_{mn}(AB)\leqslant T_n(A)T_m(B)$ for any natural numbers $m,n$.          In the case that $A=I_X$, we can apply this fact with $B=A=I_X$ to deduce that $T_{mn}(I_X)=T_{mn}(I_X^2)\leqslant T_m(I_X)T_n(I_X)$.  The standard procedure in this case is to use these inequalities to prove that either $(T_n(I_X))_{n=1}^\infty$ exhibits the quantitatively worst possible behavior for each $n$ and use this to prove the presence of certain structures (such as $\ell_1^n$ subspaces in the Rademacher case), or to prove that $(T_n(I_X))_{n=1}^\infty$ is growing/shrinking rapidly enough to ensure some non-trivial power type behavior.  This ``automatic power type'' phenomenon fails for all examples in the non-spatial case. One example, which is relevant to the subject of this work, is the diagonal operator  $F:\ell_1\to \ell_1$ given by $F\sum_{n=1}^\infty a_ne_n= \sum_{n=1}^\infty \frac{a_n}{\log (n+1)}e_n$.  This is compact, and cannot factor the Hamming cubes. But one can check that $F$ has no non-trivial Rademacher type, and therefore no non-trivial non-linear type in the sense of Bourgain, Milman, and Wolfson.  This is because for $1<p<\infty$, $$\Bigl(\mathbb{E}\|F\sum_{i=1}^n \ee(i)e_i\|^p\Big)^{1/p}\geqslant \frac{n}{\log(n+1)}$$ and $$\Bigl(\sum_{i=1}^n \|e_i\|^p\Bigr)^{1/p}= n^{1/p}=o(n/\log(n+1)).$$  More generally, we can choose any $1<p<\infty$ and a sequence $(w_n)_{n=1}^\infty$ of positive numbers vanishing as slowly as we like and define the diagonal operator $F:\ell_1\to \ell_1$ by $F\sum_{n=1}^\infty a_ne_n=\sum_{n=1}^\infty a_nw_ne_n$.   Then $b_{p,n}(F)$ is necessarily vanishing, but as slowly as we like.    Examples such as this  motivate the search for a characterization of when the worst possible behavior does not hold (in our case, worst possible behavior means factoring the Hamming cube, while in other cases it is crude finite representability/asymptotic crude finite representability of the identity operator of $\ell_1^n$, non-super weak compactness, or non-asymptotic uniform smoothability).   A technique in this case is to define a sequence of \emph{subtype} constants of the map (or family of maps). One instance of this approach is due to Beauzamy \cite{Beauzamy}, who gave a characterization of when the identity on $\ell_1$ is crudely finitely representable in a linear operator using a sequence of constants which are the linear analogues of our $a_n(\mathcal{F})$. Hinrichs proved a similar result using constants which are the linear analogues of our $b_{2,n}(\mathcal{F})$. In \cite{CDK}, asymptotic analogues of the results of Beauzamy and Hinrichs were proven for both the asymptotic linear analogues of the $a_n(\mathcal{F})$ and $b_{p,n}(\mathcal{F})$ constants.

The general approach to subtype problems is as follows: Suppose we have a sequence  $(T_n)_{n=1}^\infty$ as in the previous paragraph and positive numbers $(c_n)_{n=1}^\infty$ such that for each $\lambda>0$, $\lambda c_n$ is the supremum of $T_n(A)$ as $A$ ranges over all bounded, linear operators with $\|A\|\leqslant \lambda$.    Then one may ask if, for a given class $\mathcal{A}$ of operators with norms not more than $\lambda$, does $(\sup_{A\in \mathcal{A}} T_n(A))_{n=1}^\infty$ exhibit the essentially worst possible behavior with respect to the sequence $(c_n)_{n=1}^\infty$ (that is, $\lim\sup_n \sup_{A\in \mathcal{A}} T_n(A)/c_n>0$)?  We then say $A$ has \emph{subtype} if it does not exhibit the  worst possible behavior (that is, $\lim_n \sup_{A\in \mathcal{A}} T_n(A)/c_n=0$).  This has been applied when $T_n$ is the Rademacher/gaussian/Haar/martingale type $p$ norms. More generally, we may isolate non-linear subtype properties by replacing continuous, linear operators with Lipschitz functions, replacing the $(T_n)_{n=1}^\infty$ sequence with a sequence $(\tau_n)_{n=1}^\infty$ defined on the class of Lipschitz maps between metric spaces, and by replacing operator norm with Lipschitz constant.  One  then says that a class $\mathcal{F}$ has subtype if $\lim_n \sup_{F\in \mathcal{F}}\tau_n(F)/c_n=0$. This is the approach we take.

Now for a family $\mathcal{F}$ of $\lambda$-Lipschitz maps, let us define $e_{p,n}(\mathcal{F})$ to be the smallest constant $t>0$ such that for any $F:X\to Y\in \mathcal{F}$ and  $f:2^n\to X$, $$\mathbb{E}\varrho^f_Y(\ee, -\ee)^p \leqslant t^p \sum_{i=1}^n \mathbb{E} \varrho_X^f(\ee, d_i\ee)^p.$$   Note that  $e_{p,n}(F)\leqslant \lambda n^{1-1/p}$.  Therefore with $c_n=n^{1-1/p}$, we can say $\mathcal{F}$ has \emph{Enflo subtype} if $\lim_n e_{p,n}(\mathcal{F})/c_n=0$. But $e_{p,n}(\mathcal{F})/n^{1-1/p}=b_{p,n}(\mathcal{F})$.  Therefore the subtype approach applied to Enflo type recovers the same condition as the Bourgain, Milman, Wolfson approach.

\section{Rigidity results}

\begin{lemma} For $1<p<\infty$, $n\in\nn$, and $\Phi>1$, there exists $\phi=\phi(p,n,\Phi)\in (0,1)$ such that if $a=(a_i)_{i=1}^n\in \ell_p^n$ satisfies $\|a\|_{\ell_1^n}^p>\phi n^{p-1}\|a\|_{\ell_p^n}^p$, then $\max_i |a_i|\leqslant \Phi \min_i |a_i|$.

\label{flat}
\end{lemma}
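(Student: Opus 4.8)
The plan is to prove this as a stability (rigidity) version of the equality case of the power-mean inequality, via a soft compactness argument. The relevant sharp inequality is $\|a\|_{\ell_1^n}^p\leqslant n^{p-1}\|a\|_{\ell_p^n}^p$, which holds for every $a=(a_i)_{i=1}^n$: applying Jensen's inequality to the strictly convex function $t\mapsto t^p$ on $[0,\infty)$ and the numbers $|a_1|,\dots,|a_n|$ gives $\big(\tfrac1n\sum_i|a_i|\big)^p\leqslant\tfrac1n\sum_i|a_i|^p$, and equality holds precisely when $|a_1|=\dots=|a_n|$. Lemma \ref{flat} is the assertion that a vector coming within a factor $\phi$ close to $1$ of this bound must have all coordinates comparable, with ratio at most $\Phi$.

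I would argue by contradiction. If no such $\phi$ exists for the given $p,n,\Phi$, then for every $\phi\in(0,1)$ there is a vector witnessing the failure of the implication; taking a sequence $\phi_k\in(0,1)$ with $\phi_k\to 1$ yields vectors $a^{(k)}=(a^{(k)}_i)_{i=1}^n$ with $\|a^{(k)}\|_{\ell_1^n}^p>\phi_k n^{p-1}\|a^{(k)}\|_{\ell_p^n}^p$ and $\max_i|a^{(k)}_i|>\Phi\min_i|a^{(k)}_i|$. The first inequality forces $a^{(k)}\neq 0$, so I may rescale to arrange $\|a^{(k)}\|_{\ell_p^n}=1$; the $a^{(k)}$ then lie on the compact unit sphere of $\ell_p^n$, and I pass to a subsequence with $a^{(k)}\to a$. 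Using continuity of the norms and of $x\mapsto\max_i|x_i|$ and $x\mapsto\min_i|x_i|$, together with $\phi_k\to1$, we get $\|a\|_{\ell_p^n}=1$, $\|a\|_{\ell_1^n}^p\geqslant n^{p-1}$, and $\max_i|a_i|\geqslant\Phi\min_i|a_i|$. The first two relations say equality holds in the power-mean inequality, so all $|a_i|$ coincide; calling this common value $m$, we have $m>0$ since $\|a\|_{\ell_p^n}=1$. But then $\max_i|a_i|=\min_i|a_i|=m$, so the third relation becomes $m\geqslant\Phi m$, i.e.\ $\Phi\leqslant1$, contradicting $\Phi>1$.

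I do not expect a genuine obstacle here. The one point to handle with (mild) care is the characterization of the equality case in Jensen's inequality for $t\mapsto t^p$ --- this is exactly where $p>1$ enters, via strict convexity --- and the rest is a routine normalization-and-compactness argument in the finite-dimensional space $\ell_p^n$. If an explicit dependence $\phi(p,n,\Phi)$ were wanted, the compactness step could be replaced by a quantitative estimate coming from the uniform strict convexity of $t\mapsto t^p$ on a bounded interval; but since the lemma is used only qualitatively (as the replacement for Fact~2.5 of \cite{BMW} in the proof of Theorem \ref{BMW}), the existence statement as phrased is enough.
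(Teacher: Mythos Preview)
Your argument is correct: the contradiction-and-compactness route is airtight, and the only delicate point (the equality case of Jensen for $t\mapsto t^p$) is handled by the strict convexity that $p>1$ provides, exactly as you note.

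The paper takes a different, more direct path. It normalizes $a$ to a unit vector $x\in S_{\ell_p^n}$ with non-negative entries, introduces the ``flat'' comparison vector $y=(n^{-1/p},\ldots,n^{-1/p})\in S_{\ell_p^n}$, and observes that the hypothesis $\|a\|_{\ell_1^n}^p>\phi n^{p-1}\|a\|_{\ell_p^n}^p$ forces $\|x+y\|_{\ell_p^n}$ to be close to $2$. Uniform convexity of $\ell_p^n$ then gives $\|x-y\|_{\ell_p^n}$ small, from which one reads off that $\max_i x_i$ and $\min_i x_i$ are both close to $n^{-1/p}$ and hence to each other. Your approach is softer and arguably more elementary, relying only on strict convexity and finite-dimensional compactness; the paper's approach is in principle quantitative, since the modulus of convexity of $\ell_p^n$ is known explicitly and one could trace through to obtain an explicit $\phi(p,n,\Phi)$. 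As you correctly observe, the lemma is invoked only existentially in the paper (both in Theorem~\ref{BMW} and in the proof of Lemma~\ref{eta}), so either argument suffices.
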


\begin{proof} By the uniform convexity of $\ell_p^n$, there exists $0<\delta<1$ such that if $x,y\in B_{\ell_p^n}$ are such that $\|x+y\|_{\ell_p^n}>2(1-\delta)$, then $\|x-y\|_{\ell_p^n}<\frac{1}{2n^{1/p}}\cdot \frac{\Phi-1}{\Phi}$. Now let $\phi=(1-\delta)^p$.    Suppose $a=(a_i)_{i=1}^n\in \ell_p^n$ satisfies $\|a\|_{\ell_1^n}^p>\phi n^{p-1} \|a\|_{\ell_p^n}^p$.  Without loss of generality, let us assume that $a_i=|a_i|$ for all $1\leqslant i\leqslant n$.    Let $x_i=a_i/\|a\|_{\ell_p^n}$ and $x=(x_i)_{i=1}^n\in S_{\ell_p^n}$. Let $y_i=n^{-1/p}$ for $1\leqslant i\leqslant n$ and $y=(y_i)_{i=1}^n\in S_{\ell_p^n}$. Note that $\|x\|_{\ell_1^n}>\phi^{1/p} n^{1-1/p}=(1-\delta)n^{1-1/p}$ and $\|y\|_{\ell_1^n}=n^{1-1/p}$. Then $$\|x+y\|_{\ell_p^n} \geqslant \|x+y\|_{\ell_1^n}/n^{1-1/p} = \frac{\|x\|_{\ell_1^n}+\|y\|_{\ell_1^n}}{n^{1-1/p}}\geqslant (1-\delta)+1>2(1-\delta).$$  Therefore $\|x-y\|_{\ell_p^n}\leqslant \frac{1}{2n^{1/p}}\cdot\frac{\Phi-1}{\Phi}$.   Since $\max_i x_i\geqslant 1/n^{1/p}$, we deduce  that $$\max_i x_i-\min_i x_i \leqslant  |n^{-1/p}-\max_i x_i|+|n^{-1/p}-\min_i x_i| \leqslant 2\|x-y\|_{\ell_\infty^n}\leqslant \frac{1}{n^{1/p}}\cdot \frac{\Phi-1}{\Phi} \leqslant \Bigl(\frac{\Phi-1}{\Phi}\Bigr)\max_i x_i.$$  Rearranging yields that $$\max_i x_i\leqslant \Phi \min_i x_i,$$ and we deduce the result by homogeneity.

\end{proof}

\begin{lemma} Fix $1<p<\infty$.  Let $\Omega$ be a  probability space and let $D_Y, D_X, E_Y, E_X:\Omega\to \rr$, $\lambda, \Theta>0$, $a,b,\nu,  \mu\in (0,1)$ be such that \begin{enumerate}[(i)]\item $D_Y, D_X, E_Y, E_X$ are non-negative, measurable functions on $\Omega$  such that  $D_Y\leqslant E_Y$,  $D_X\leqslant E_X$,  $D_Y\leqslant \lambda^p D_X$,   $E_Y\leqslant \lambda^p E_X$,   $D_Y\leqslant (1+\nu)\Theta^p E_X$, \item $\mathbb{E}D_Y>(1-\nu)\Theta^p \mathbb{E}E_X$,  $\mathbb{E}D_Y>(1-\mu) \mathbb{E}E_Y$,  $\mathbb{E}D_X>(1-\mu)\mathbb{E}E_X$, \item $\lambda^p\Bigl(\frac{2\mu}{a}+\frac{2\nu}{b}\Bigr)<\Theta^p(1-\nu)$. \end{enumerate}

Then there exists $\ee\in \Omega$ such that $D_Y(\ee)>(1-a)E_Y(\ee)$,  $D_X(\ee)>(1-a)E_Y(\ee)$, and $D_Y(\ee)>(1-b)\Theta^p E_X(\ee)$. 

\label{expect}
\end{lemma}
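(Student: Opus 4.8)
The plan is to locate the required $\ee$ in the complement of the three ``defect sets''
\[
B_1=\{D_Y\leqslant(1-a)E_Y\},\qquad
B_2=\{D_X\leqslant(1-a)E_X\},\qquad
B_3=\{D_Y\leqslant(1-b)\Theta^pE_X\},
\]
on which the three asserted strict inequalities respectively fail (here we read the second of them in the form $D_X(\ee)>(1-a)E_X(\ee)$, the form compatible with hypothesis (ii)); it then suffices to show $B_1\cup B_2\cup B_3\neq\Omega$. I would not measure the $B_i$ with $\mathbb P$ but against the density $E_Y$, bounding each $\mathbb E[E_Y\mathbf 1_{B_i}]$ and verifying the three bounds sum to strictly less than $\mathbb E E_Y$. (Note first $\mathbb E E_Y>0$: if $E_Y\equiv 0$ then, since $0\leqslant D_Y\leqslant E_Y$, also $D_Y\equiv 0$, contradicting $\mathbb E D_Y>(1-\mu)\mathbb E E_Y=0$ from (ii); likewise $\mathbb E E_X>0$, since $E_X\equiv 0$ would force $D_X\equiv 0$, hence $D_Y\leqslant\lambda^pD_X\equiv 0$ and $\mathbb E D_Y=0$, again against (ii).)

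The one computational ingredient is an elementary Markov-type defect estimate: if $0\leqslant G\leqslant H$ are measurable with $\mathbb E G>(1-\eta)\mathbb E H$ and $0<\alpha<1$, then on $\{G\leqslant(1-\alpha)H\}$ one has $0\leqslant\alpha H\leqslant H-G$, so $\alpha\,\mathbb E[H\mathbf 1_{\{G\leqslant(1-\alpha)H\}}]\leqslant\mathbb E[H-G]<\eta\,\mathbb E H$, i.e.\ $\mathbb E[H\mathbf 1_{\{G\leqslant(1-\alpha)H\}}]<(\eta/\alpha)\mathbb E H$. Applying this to $(G,H)=(D_Y,E_Y)$ with $\eta=\mu$, $\alpha=a$ (legitimate by $D_Y\leqslant E_Y$ and $\mathbb E D_Y>(1-\mu)\mathbb E E_Y$) gives $\mathbb E[E_Y\mathbf 1_{B_1}]<(\mu/a)\mathbb E E_Y$, and to $(D_X,E_X)$ with $\eta=\mu$, $\alpha=a$ gives $\mathbb E[E_X\mathbf 1_{B_2}]<(\mu/a)\mathbb E E_X$. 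For $B_3$ I would run the same estimate against the dominating function $H=(1+\nu)\Theta^pE_X$ (using $D_Y\leqslant(1+\nu)\Theta^pE_X$ from (i)): here $\mathbb E G=\mathbb E D_Y>(1-\nu)\Theta^p\mathbb E E_X=\tfrac{1-\nu}{1+\nu}\mathbb E H$ and $B_3=\{G\leqslant(1-\alpha)H\}$ with $\alpha=(b+\nu)/(1+\nu)\in(0,1)$, so $\mathbb E[E_X\mathbf 1_{B_3}]<\tfrac{2\nu}{b+\nu}\mathbb E E_X<(2\nu/b)\mathbb E E_X$.

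To combine these in one reference I would transport the two $E_X$-estimates to $E_Y$-mass using $E_Y\leqslant\lambda^pE_X$ pointwise, obtaining $\mathbb E[E_Y\mathbf 1_{B_2}]<(\lambda^p\mu/a)\mathbb E E_X$ and $\mathbb E[E_Y\mathbf 1_{B_3}]<(2\lambda^p\nu/b)\mathbb E E_X$, and then replace $\mathbb E E_X$ by $\mathbb E E_Y$: from $\mathbb E D_Y>(1-\nu)\Theta^p\mathbb E E_X$ and $\mathbb E D_Y\leqslant\mathbb E E_Y$ (as $D_Y\leqslant E_Y$) one gets $\mathbb E E_X<\mathbb E E_Y/((1-\nu)\Theta^p)$. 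Summing,
\[
\mathbb E\bigl[E_Y(\mathbf 1_{B_1}+\mathbf 1_{B_2}+\mathbf 1_{B_3})\bigr]
<\Bigl(\frac{\mu}{a}+\frac{\lambda^p\mu}{a(1-\nu)\Theta^p}+\frac{2\lambda^p\nu}{b(1-\nu)\Theta^p}\Bigr)\mathbb E E_Y .
\]
To see the bracket is $<1$: from (i)--(ii), $\mathbb E D_Y\leqslant\lambda^p\mathbb E D_X\leqslant\lambda^p\mathbb E E_X$ and $\mathbb E D_Y>(1-\nu)\Theta^p\mathbb E E_X$, so $(1-\nu)\Theta^p<\lambda^p$; hence $\mu/a<\lambda^p\mu/(a(1-\nu)\Theta^p)$, and the bracket is at most $\tfrac{\lambda^p}{(1-\nu)\Theta^p}\bigl(\tfrac{2\mu}{a}+\tfrac{2\nu}{b}\bigr)$, which is $<1$ by hypothesis (iii). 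Therefore $\mathbb E[E_Y\mathbf 1_{B_1\cup B_2\cup B_3}]<\mathbb E E_Y$, so the complement of $B_1\cup B_2\cup B_3$ has positive $E_Y$-mass and is in particular nonempty; any $\ee$ there is as required.

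I expect the only real subtlety to be the choice of reference density. Each of $B_2$, $B_3$ is a ``degeneracy on the $X$-side'' event, naturally small in $E_X$-mass, while $B_1$ is a ``degeneracy on the $Y$-side'' event, naturally small in $E_Y$-mass, and there is no pointwise domination of $E_X$ by $E_Y$, so one must pay something to reconcile them. The two steps $E_Y\leqslant\lambda^pE_X$ and $\mathbb E E_X<\mathbb E E_Y/((1-\nu)\Theta^p)$ cost a factor $\lambda^p/((1-\nu)\Theta^p)$ when moving the $B_2,B_3$ bounds into $E_Y$-mass, and hypothesis (iii) is precisely the assertion that this factor, applied to the combined defect $2\mu/a+2\nu/b$, stays below $1$ — so (iii) is exactly calibrated for the union bound to close. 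A minor point is that $B_3$ must be treated with the dominating function $(1+\nu)\Theta^pE_X$ in place of $\Theta^pE_X$, which costs only the harmless replacement of $2\nu/b$ by $2\nu/(b+\nu)$.
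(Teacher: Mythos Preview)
Your proof is correct and follows essentially the same approach as the paper: define the three defect sets, obtain Markov-type bounds on each (the paper gets the identical estimates $\mathbb{E}1_{A_Y}E_Y\leqslant(\mu/a)\mathbb{E}E_Y$, $\mathbb{E}1_{A_X}E_X\leqslant(\mu/a)\mathbb{E}E_X$, $\mathbb{E}1_B E_X\leqslant(2\nu/b)\mathbb{E}E_X$), and close with hypothesis~(iii). The only organizational difference is that the paper argues by contradiction and compares $\mathbb{E}D_Y$ against $\mathbb{E}E_X$ directly (using $D_Y\leqslant E_Y$ on $A_Y$ and $D_Y\leqslant\lambda^pE_X$ on $A_X\cup B$, then $\mathbb{E}E_Y\leqslant\lambda^p\mathbb{E}E_X$), whereas you work unconditionally with $E_Y$-mass and convert via $E_Y\leqslant\lambda^pE_X$ and $\mathbb{E}E_X<\mathbb{E}E_Y/((1-\nu)\Theta^p)$; the two routes are equivalent and both land exactly on~(iii). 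Your reading of the second conclusion as $D_X(\ee)>(1-a)E_X(\ee)$ matches the paper's own proof, which defines $A_X=(D_X\leqslant(1-a)E_X)$.
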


\begin{proof} Let $A_Y=(D_Y\leqslant (1-a)E_Y),$ $A_X=(D_X\leqslant (1-a)E_X),$ and $B=(D_Y\leqslant (1-b)\Theta^p E_X).$   Then the conclusion  of the lemma is equivalent to $A^c_Y\cap A_X^c\cap B^c\neq \varnothing$. We work by contradiction. Assume $A^c_Y\cap A^c_X\cap B^c=\varnothing$, so $\Omega=A_Y\cup A_X\cup B$.    Let us first note that $$ (1-\mu)\mathbb{E}E_Y  < \mathbb{E}D_Y = \mathbb{E}1_{A_Y}D_Y + \mathbb{E}1_{A_Y^c}D_Y \leqslant (1-a)\mathbb{E}1_{A_Y}E_Y + \mathbb{E}1_{A_Y^c}E_Y = \mathbb{E}E_Y- a \mathbb{E}1_{A_Y}E_Y.$$ From this it follows that $$ \mathbb{E}1_{A_Y}E_Y \leqslant \frac{\mu}{a}\mathbb{E}E_Y. $$    By replacing each $Y$ with $X$, we deduce that $$ \mathbb{E}1_{A_X}E_X \leqslant \frac{\mu}{a}\mathbb{E}E_X. $$

Also, \begin{align*} \Theta^p (1-\nu)\mathbb{E}E_X & < \mathbb{E}D_Y = \mathbb{E}1_BD_Y +\mathbb{E}1_{B^c}D_Y \leqslant \Theta^p(1-b)\mathbb{E}1_B E_X + \Theta^p(1+\nu)\mathbb{E}1_{B^c}E_X. \end{align*}  Dividing by $\Theta^p$ and rearranging yields that  $$\mathbb{E}1_B E_X \leqslant \frac{2\nu}{b}\mathbb{E}E_X.$$

Recalling that $A_Y\cup A_X\cup B=\Omega$,  we deduce that \begin{align*} \Theta^p(1-\nu) \mathbb{E}E_X & < \mathbb{E}D_Y \leqslant \mathbb{E}1_{A_Y}D_Y+\mathbb{E}1_{A_X}D_Y+ \mathbb{E}1_BD_Y \\ & \leqslant \mathbb{E}1_{A_Y}E_Y + \lambda^p \mathbb{E}1_{A_X}E_X +\lambda^p \mathbb{E}1_BE_X \\ & \leqslant \frac{\mu}{a}\mathbb{E}E_Y + \frac{\mu\lambda^p}{a} \mathbb{E}E_X +\frac{2\nu\lambda^p}{b} \mathbb{E}E_X \\ & \leqslant \lambda^p\Bigl(\frac{2\mu}{a}+\frac{2\nu}{b}\Bigr)\mathbb{E}E_X. \end{align*} 

Since $\mathbb{E}E_X>0$, this contradicts $(iii)$ and finishes the proof.

\end{proof}

For a natural number $n$, we let $[n]=\{1, \ldots, n\}$ denote the integer interval.   Fix natural numbers $ l_1, \ldots, l_d$ and let $L=\prod_{j=1}^d l_j$. We define $T=\cup_{i=0}^d \Lambda_i$  as follows.   We let $\Lambda_0=\{[L]\}$ consist of a single integer interval.    Now suppose that for $i<d$, $\Lambda_i$ has been defined and consists of pairwise disjoint subintervals of $[L]$ each of which has cardinality $\prod_{j=i+1}^d l_j$.    For each $I\in \Lambda_i$, let $\mathcal{I}_I=\{J^I_1, \ldots, J^I_{l_{i+1}}\}$ be a partition of $I$ into subintervals of equal cardinalilty (and therefore of cardinality $\prod_{j=i+2}^d l_j$).    Now let $\Lambda_{i+1}=\cup_{I\in \Lambda_i} \mathcal{I}_I$.   This completes the recursive definition of $\Lambda_0, \ldots, \Lambda_d$. Now let $T=\cup_{i=0}^d \Lambda_i$.   We refer to $T$ as the $(l_1, \ldots, l_d)$ \emph{interval tree}.  For $0<j\leqslant d$ and $J\in \Lambda_j$, let $J^-$ be the member $I$ of $\Lambda_{j-1}$ such that $J\subset I$.   That is, $J^-$ is the interval $I\in \Lambda_j$ such that $J\in \mathcal{I}_I$.   

\begin{rem}\upshape Suppose $l_1, \ldots, l_{d+1}$ are natural numbers and $T$ is the $(l_1, \ldots, l_{d+1})$ interval tree.  Suppose that $(t_I)_{I\in T}$ is a collection of non-negative numbers such that for each $0\leqslant j\leqslant d$ and $I\in \Lambda_j$, $t_I\leqslant \sum_{J^-=I} t_J$.   Then using this fact repeatedly yields that for any $0\leqslant i<j\leqslant d+1$ and $I\in \Lambda_i$, $$t_I\leqslant \sum_{I\supset J\in \Lambda_j} t_J.$$

Also, by H\"{o}lder's inequality, it follows that for any such $j$ and $I$, $$t_I^p\leqslant l_{j+1}^{p-1}\sum_{J^-=I} t^p_J,$$ and more generally, $$t^p_I \leqslant \Bigl(\prod_{m=i+1}^j l_m^{p-1}\Bigr)\sum_{I\supset J\in \Lambda_j} t^p_J$$ for any $0\leqslant i<j\leqslant d+1$ and $I\in \Lambda_i$.   We will use this fact frequently in this section. 

\label{vase}

\end{rem}

\begin{lemma} Fix $1<p<\infty$. Fix natural numbers $l_1, \ldots, l_d$,  $0<\mu<1$,  $\lambda, \Theta>0$, and $M>\lambda/\Theta$. Then for any $0<\eta_1<1$,  there exists $0<\eta<\eta_1$ with the following property: Suppose $l_{d+1}$ is a natural number, $T$ is the $(l_1, \ldots, l_{d+1})$ interval tree, and $(r_I)_{I\in T}, (s_I)_{I\in T}$ are non-negative numbers such that \begin{enumerate}[(i)]\item for each $I\in T$, $r_I\leqslant \lambda s_I$, \item for each $I\in T\setminus \Lambda_{d+1}$, $r_I\leqslant \sum_{J^-=I}r_J$ and  $s_I\leqslant \sum_{J^-=I}s_J$, \item for each $I\in \Lambda_d$, $r_I^p\leqslant (1+\eta)\Theta^p l_{d+1}^{p-1} \sum_{J^-=I}s^p_J$, \item $r_{[L]}>(1-\eta)\Theta^p\Bigl(\prod_{i=1}^{d+1} l_i^{p-1}\Bigr) \sum_{I\in \Lambda_{d+1}}s_I^p$. \end{enumerate}   Then for any $0\leqslant j\leqslant d$, $0\leqslant i<d$, and $I_1\in \Lambda_i$, $\max_{I\in \Lambda_j} s_I\leqslant M\min_{I\in \Lambda_j} s_I$ and $r_{I_1}^p>(1-\mu)l_{j+1}^{p-1}\sum_{J^-=I_1} r_J^p$.

 \label{eta}

\end{lemma}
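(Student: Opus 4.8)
The plan is to deduce the conclusion from the two preceding lemmas by a careful bookkeeping of constants, proceeding by downward induction on the tree level. First I would fix the data $p, l_1, \ldots, l_d, \mu, \lambda, \Theta, M$ and choose an auxiliary constant: apply Lemma~\ref{flat} with parameters $p$, with $n$ running over each of $l_1,\ldots,l_d$ (taking $\Phi$ slightly below $M\Theta/\lambda$, so that $\Phi<M$ after we pay for the $\lambda/\Theta$ loss that appears when comparing $r$-quantities to $s$-quantities), obtaining a threshold $\phi_0\in(0,1)$; we will want the $s_J^p$, as $J$ ranges over the children of a fixed node, to be ``$\phi_0$-flat'' in the $\ell_1$-vs-$\ell_p$ sense. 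Then I would choose $\eta<\eta_1$ small enough that $(1-\eta)$ and $(1+\eta)$ are close enough to $1$ to force, at every level simultaneously, both the near-equality $r_{I}^p>(1-\mu)l_{j+1}^{p-1}\sum_{J^-=I}r_J^p$ and the flatness hypothesis of Lemma~\ref{flat} for the $s$-vectors. The cleanest way to organize the constant chase is to first establish that hypotheses (iii)--(iv), combined with the submultiplicative inequalities of Remark~\ref{vase} applied to both $(r_I)$ and $(s_I)$, force a chain of equalities to be nearly saturated: each intermediate H\"older/triangle step between level $0$ and level $d+1$ must lose at most a factor of $(1-\eta)/(1+\eta)$ or so.

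The key step is the ``no slack propagates'' observation. Using Remark~\ref{vase}, $r_{[L]}^p \le \bigl(\prod_{i=1}^{j}l_i^{p-1}\bigr)\sum_{I\in\Lambda_j} r_I^p$ and, iterating hypothesis (ii) together with H\"older, $\sum_{I\in\Lambda_j} r_I^p \le \bigl(\prod_{i=j+1}^{d}l_i^{p-1}\bigr)\sum_{I\in\Lambda_d} r_I^p \le (1+\eta)\Theta^p\bigl(\prod_{i=j+1}^{d+1}l_i^{p-1}\bigr)\sum_{I\in\Lambda_{d+1}}s_I^p$ by (iii); chaining with (iv) pins $r_{[L]}^p$ between $(1-\eta)\Theta^p\Pi$ and $(1+\eta)\Theta^p\Pi$ times $\sum_{\Lambda_{d+1}}s_I^p$, where $\Pi=\prod_{i=1}^{d+1}l_i^{p-1}$. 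Since the total product telescopes, each individual H\"older step $t_I^p\le l_{j+1}^{p-1}\sum_{J^-=I}t_J^p$ (for $t=r$) and each triangle step for $s$ must be tight up to a multiplicative error that we can make $<\mu$ by shrinking $\eta$; this gives the second conclusion $r_{I_1}^p>(1-\mu)l_{j+1}^{p-1}\sum_{J^-=I_1}r_J^p$. For the first conclusion, the near-tightness of H\"older for the $s$-vector on the children of a given node means exactly that $\|(s_J)_{J^-=I}\|_{\ell_1}^p > \phi_0\, l_{j+1}^{p-1}\|(s_J)_{J^-=I}\|_{\ell_p}^p$, so Lemma~\ref{flat} yields $\max_{J^-=I}s_J\le \Phi\min_{J^-=I}s_J$; then I would propagate this node-wise comparison across a whole level $\Lambda_j$ by walking up from any $I_1, I_2\in\Lambda_j$ to a common ancestor and using (ii) to control $s_I$ by the sum of its children's values and vice versa, absorbing the accumulated factors into the gap between $\Phi$ and $M$. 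Here Lemma~\ref{expect} is the tool that converts the ``on average nearly saturated'' statements (expectations over the uniform measure on a level) into a pointwise statement at a single node, which is what we actually need to invoke Lemma~\ref{flat} at that node.

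The main obstacle I anticipate is the order of quantifiers in the conclusion: it asks for the $s$-flatness to hold on \emph{every} level $\Lambda_j$ ($0\le j\le d$) and the $r$-near-equality at \emph{every} interior node $I_1\in\Lambda_i$ ($i<d$), with a \emph{single} $\eta$ chosen before $l_{d+1}$ (hence before $T$) is revealed. So the constant $\eta$ cannot depend on $l_{d+1}$, and I must make sure the loss in the telescoping estimate is bounded independently of $l_{d+1}$ — which it is, because the error in each H\"older step contributes multiplicatively and there are only $d+1$ steps, but one has to be careful that the "flatness budget" $\phi_0$ is spent correctly at the bottom level $\Lambda_d\to\Lambda_{d+1}$, where hypothesis (iii) is what is given, rather than a saturated inequality derived from (iv). I would handle this by treating the step $d\to d+1$ separately using (iii) directly (it already asserts $r_I^p\le(1+\eta)\Theta^p l_{d+1}^{p-1}\sum s_J^p$, which combined with the lower bound on $r_I^p$ coming from the tightness of all steps above level $d$ gives the needed near-equality there too), and the interior steps $j\to j+1$ for $j<d$ uniformly via the telescoping argument. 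The rest is routine propagation of constants, taking $\eta$ to be a sufficiently small explicit function of $p$, $\mu$, $M/(\lambda/\Theta)$, and $d$.
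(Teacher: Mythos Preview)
Your plan has a real gap in the argument for the $s$-flatness conclusion. You propose to apply Lemma~\ref{flat} to the vector $(s_J)_{J^-=I}$ of children of a fixed node, which requires that the H\"older step $s_I^p\leqslant l_{j+1}^{p-1}\sum_{J^-=I}s_J^p$ (or the $\ell_1$ versus $\ell_p$ inequality for the $s_J$'s) be nearly saturated. But nothing in hypotheses (i)--(iv) forces this. The chain you write down correctly shows that the \emph{$r$-telescoping} is nearly saturated at every level, and that the $r$-to-$s$ link at level $d$ (hypothesis (iii)) is nearly saturated; it says nothing about the intermediate $s$-steps. Indeed, $s_J$ could be much larger than $r_J/\lambda$ for many $J$ without disturbing any of the $r$-inequalities. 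The paper avoids this by a different route: it applies Lemma~\ref{flat} not locally to children of a node but globally to the vector $(r_I)_{I\in\Lambda_j}$ across an entire level (Step~1), using that $(\sum_{\Lambda_j} r_I)^p$ is nearly $|\Lambda_j|^{p-1}\sum_{\Lambda_j} r_I^p$ by the telescoping you identified. From the resulting $r$-flatness it deduces flatness of the subtree $s$-sums $\sum_{I\supset J\in\Lambda_{d+1}}s_J^p$ (Step~2), and only then, via $r_I\leqslant\lambda s_I$ and $s_I^p\leqslant$ (subtree $s$-sum), obtains $\max s_I\leqslant M\min s_I$ (Step~3).

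There is a second, related gap in your argument for the $r$-conclusion. Your ``no slack propagates'' observation shows that $\sum_{I\in\Lambda_j}r_I^p$ is close to $l_{j+1}^{p-1}\sum_{I\in\Lambda_{j+1}}r_I^p$ for every $j$, but the conclusion requires $r_{I_1}^p>(1-\mu)l_{j+1}^{p-1}\sum_{J^-=I_1}r_J^p$ for \emph{each individual} node $I_1$. A single node with small $r_{I_1}$ relative to its children could have negligible effect on the aggregate sums. The paper closes this gap in Step~4 by using the already-established $r$-flatness (Step~1): since all $r_I$ at a given level are comparable, one bad node costs at least a $1/(\Phi^p|\Lambda_j|)$ fraction of the total, which can be made to contradict (iv). Your appeal to Lemma~\ref{expect} for the average-to-pointwise passage is misplaced; that lemma is tailored to the probability-space setup in Section~4 and is not used in the proof of this lemma at all.
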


\begin{proof} First fix $ \Phi>1$ such that $M>\Phi^3 \lambda/\Theta$.   Now let $0<\phi<1$ be such that for any $1\leqslant n\leqslant \prod_{i=1}^d l_i$ and any $v=(v_i)_{i=1}^n\in \ell_p^n$ with $\max_i |v_i|>\Phi\min_i |v_i|$, $\|v\|_{\ell_1^n}^p<\phi n^{p-1}\|v\|_{\ell_p^n}^p$.  Such a $\phi$ exists by Lemma \ref{flat}.  Now fix $0<\eta<\eta_1$ so small that \begin{enumerate}[(a)]\item $\phi(1+\eta)<1-\eta$, \item $\frac{\Phi^p}{1+\eta}-\frac{1}{1-\eta}>\Bigl(\frac{1}{1-\eta}-\frac{1}{1+\eta}\Bigr)\Bigl(\prod_{i=1}^d l_i\Bigr)\Phi^p$, \item $M> \frac{ \Phi^3 \lambda}{(1-\eta)^{1/p}\Theta}$, \item $\Bigl(1-\frac{\mu}{\Phi^p \prod_{i=1}^d l_i}\Bigr)(1+\eta)<(1-\eta)$.  \end{enumerate}

Now suppose that $l_{d+1}$, $(r_I)_{I\in T}$, $(s_I)_{I\in T}$ are as in the lemma.

Step $1$: For any $0\leqslant j\leqslant d$, $\max_I r_I\leqslant \Phi \min_I r_I$.

If it were not so, then by the choice of $\phi$ applied to the vector $(r_I)_{I\in \Lambda_j}\in \ell_p(\Lambda_j)$, $$\Bigl(\sum_{I\in \Lambda_j} r_I\Bigr)^p \leqslant \phi\Bigl(\prod_{i=1}^j l_i^{p-1}\Bigr)\sum_{I\in \Lambda_j}r_I^p.$$   Then \begin{align*} r_{[L]}^p & \leqslant \Bigl(\sum_{I\in \Lambda_j} r_I\Bigr)^p < \phi\Bigl(\prod_{i=1}^j l_i^{p-1}\Bigr)\sum_{I\in \Lambda_j}r_I^p  \leqslant \phi\Bigl(\prod_{i=1}^d l_i^{p-1}\Bigr)\sum_{I\in \Lambda_d} r_I^p \\  & \leqslant \phi(1+\eta)\Theta^p\Bigl(\prod_{i=1}^{d+1} l_i^{p-1}\Bigr)\sum_{I\in \Lambda_{d+1}}s_I^p.\end{align*}  But since $r_{[L]}^p>(1-\eta)\Theta^p (\prod_{i=1}^{d+1}l_i^{p-1})\sum_{I\in \Lambda_{d+1}}s_I^p$, we contradict item $(a)$ of our choice of $\eta$. This completes Step $1$.  Note that this implies that for each $0\leqslant j\leqslant d$ and $I\in \Lambda_j$, $r_I>0$.

Step $2$: For any $0\leqslant j\leqslant d$, $$\max_{I\in \Lambda_j} \sum_{I\supset J\in \Lambda_{d+1}}s^p_J \leqslant \Phi^{2p} \min_{I\in \Lambda_j} \sum_{I\supset J\in \Lambda_{d+1}}s^p_J.$$

If it were not so, we could find $0\leqslant j\leqslant d$ and $I_1, I_2\in \Lambda_j$ such that $$\frac{1}{ \Phi^{2p}}\sum_{I_1\supset J\in \Lambda_{d+1}}s^p_J>\sum_{I_2\supset J\in \Lambda_{d+1}}s^p_J.$$   Then by Step $1$,  \begin{align*} r_{I_1}^p & \leqslant \Phi^p r_{I_2}^p \leqslant \Phi^p \Bigl(\prod_{i=j+1}^d l_i^{p-1}\Bigr)\sum_{I_2\supset J\in \Lambda_d} r^p_J \leqslant \Phi^p(1+\eta)\Theta^p \Bigl(\prod_{i=j+1}^{d+1} l_i^{p-1}\Bigr)\sum_{I_2\supset J\in \Lambda_{d+1}}s^p_J \\ & \leqslant \frac{(1+\eta)\Theta^p}{\Phi^p}\Bigl(\prod_{i=j+1}^{d+1} l_i^{p-1}\Bigr)\sum_{I_1\supset J\in \Lambda_{d+1}}s_J^p.  \end{align*} Note that for any $I\in \Lambda_j$, $$r_I^p\leqslant \Bigl(\prod_{i=j+1}^d l_i^{p-1}\Bigr)\sum_{I\supset J\in \Lambda_d} r_J^p \leqslant (1+\eta)\Theta^p\Bigl(\prod_{i=j+1}^{d+1} l_i^{p-1}\Bigr)\sum_{I\in \Lambda_{d+1}} s_I^p.$$   Since $$(1-\eta)\Theta^p\Bigl(\prod_{i=1}^{d+1} l_i^{p-1}\Bigr)\sum_{I\in \Lambda_{d+1}}s_I^p <r^p_{[L]} \leqslant \Bigl(\prod_{i=1}^j l_i^{p-1}\Bigr)\sum_{I\in \Lambda_j} r_I^p,$$  we see that \begin{align*} \frac{1}{(1+\eta)\Theta^p}\Bigl[ \Phi^p r_{I_1}^p +\sum_{I_1\neq I\in \Lambda_j}r_I^p\Bigr] \leqslant \Bigl(\prod_{i=j+1}^{d+1} l_i^{p-1}\Bigr)\sum_{I\in \Lambda_{d+1}}s^p_I \leqslant \frac{1}{(1-\eta)\Theta^p}\sum_{I\in \Lambda_j}r^p_I. \end{align*}   Manipulating the first and last terms of this inequality, we deduce that \begin{align*} \Bigl(\frac{\Phi^p}{1+\eta}-\frac{1}{1-\eta}\Bigr)r_{I_1}^p & \leqslant \Bigl(\frac{1}{1-\eta}-\frac{1}{1+\eta}\Bigr)\sum_{I_1\neq I\in \Lambda_j} r_I^p  \leqslant \Bigl(\frac{1}{1-\eta}-\frac{1}{1+\eta}\Bigr)\Phi^p |\Lambda_j|r_{I_1}^p \\ & \leqslant \Bigl(\frac{1}{1-\eta}-\frac{1}{1+\eta}\Bigr)\Phi^p \Bigl(\prod_{i=1}^d l_i\Bigr)r_{I_1}^p. \end{align*} Since $r_{I_1}>0$, we reach a contradiction of $(b)$ of our choice of $\eta$.

Step $3$: For any $0\leqslant j\leqslant d$, $\max_{I\in \Lambda_j} s_I\leqslant M    \min_{I\in \Lambda_j} s_I$.   Fix such a $j$ and let  $$R=\max_{I\in \Lambda_j} r_I \hspace{15mm}  S=\max_{I\in \Lambda_j} s_I \hspace{15mm} S_1=\max_{I\in \Lambda_j}\Bigl(\prod_{i=j+1}^{d+1}l_i^{p-1}\Bigr)\sum_{I\supset J\in \Lambda_{d+1}}s^p_J$$ and $$r=\min_{I\in \Lambda_j} r_I \hspace{15mm}  s=\min_{I\in \Lambda_j} s_I \hspace{15mm} s_1=\min_{I\in \Lambda_j}\Bigl(\prod_{i=j+1}^{d+1}l_i^{p-1}\Bigr)\sum_{I\supset J\in \Lambda_{d+1}}s^p_J. $$ We know from Step $1$ that $R\leqslant \Phi r$. We know from Step $2$ that $S_1\leqslant \Phi^{2p} s_1$. We know from hypothesis that $R\leqslant \lambda S$, and we know from Remark \ref{vase} that $S^p\leqslant S_1$.   Moreover, \begin{align*} \Bigl(\prod_{i=1}^j l_i^{p-1}\Bigr)s_1|\Lambda_j| & \leqslant \Bigl(\prod_{i=1}^{d+1}l_i^{p-1}\Bigr)\sum_{I\in \Lambda_j}\sum_{I\supset J\in \Lambda_{d+1}}s^p_J =\Bigl(\prod_{i=1}^{d+1}l_i^{p-1}\Bigr)\sum_{I\in \Lambda_{d+1}}s_I^p < [(1-\eta)\Theta^p]^{-1} r_{[L]}^p \\ & \leqslant [(1-\eta)\Theta^p]^{-1} \Bigl(\prod_{i=1}^j l_i^{p-1}\Bigr) \sum_{I\in \Lambda_j} r_I^p \leqslant [(1-\eta)\Theta^p]^{-1} \Bigl(\prod_{i=1}^j l_i^{p-1}\Bigr)R^p |\Lambda_j| \\ & \leqslant  [(1-\eta)\Theta^p]^{-1}\Phi^p \Bigl(\prod_{i=1}^j l_i^{p-1}\Bigr)r^p |\Lambda_j| \leqslant [(1-\eta)\Theta^p]^{-1}\Phi^p \lambda^p \Bigl(\prod_{i=1}^j l_i^{p-1}\Bigr)s^p |\Lambda_j|.\end{align*}     Therefore $ s_1\leqslant [(1-\eta)\Theta^p]^{-1}\Phi^p \lambda^p s^p$.  Also, $S^p\leqslant S_1\leqslant \Phi^{2p} s_1,$ so $$S^p \leqslant \Phi^{2p} s_1 \leqslant [(1-\eta)\Theta^p]^{-1} \Phi^{3p} \lambda^p s^p.$$ Taking $p^{th}$ roots and appealing to $(c)$ finishes Step $3$.

Step $4$: For any $0\leqslant j<d$ and $I\in \Lambda_j$, $r^p_I>(1-\mu)l_{j+1}^{p-1}\sum_{J^-=I}r^p_J$.    If it were not so, then for some $I_1\in \Lambda_j$, $r^p_I\leqslant (1-\mu)l_{j+1}^{p-1}\sum_{J^-=I_1}r_J^p$.  Let us note that $$\sum_{J^-=I_1} r^p_J \geqslant \frac{1}{\Phi^p |\Lambda_{j+1}|}\sum_{J\in \Lambda_{j+1}}r^p_J \geqslant \frac{1}{\Phi^p \prod_{i=1}^d l_i}\sum_{J\in \Lambda_{j+1}}r^p_I,$$ so \begin{align*} \sum_{I\in \Lambda_j} r_I^p & \leqslant (1-\mu)l_{j+1}^{p-1}\sum_{J^-=I_1}r^p_J + l_{j+1}^{p-1}\sum_{I_1\neq I\in \Lambda_j}\sum_{ J^-=I}r^p_J \leqslant l_{j+1}^{p-1}\sum_{J\in \Lambda_{j+1}}r^p_J-l_{j+1}^{p-1}\mu \sum_{J^-=I_1}r_J^p \\ & \leqslant \Bigl(1-\frac{\mu}{\Phi^p\prod_{i=1}^d l_i}\Bigr)l_{j+1}^{p-1}\sum_{J\in \Lambda_{j+1}}r^p_J.  \end{align*}  Then \begin{align*} r_{[L]}^p & \leqslant \Bigl(\prod_{i=1}^j r_i^{p-1}\Bigr)\sum_{i\in \Lambda_j}r_I^p \leqslant \Bigl(1-\frac{\mu}{\Phi^p\prod_{i=1}^d l_i}\Bigr)\Bigl(\prod_{i=1}^{j+1}l_i^{p-1}\Bigr)\sum_{I\in \Lambda_{j+1}}r_I^p \\ & \leqslant \Bigl(1-\frac{\mu}{\Phi^p\prod_{i=1}^d l_i}\Bigr)\Bigl(\prod_{i=1}^d l_i^{p-1}\Bigr)\sum_{I\in \Lambda_d}r_I^p \\ & \leqslant \Bigl(1-\frac{\mu}{\Phi^p\prod_{i=1}^d l_i}\Bigr)(1+\eta)\Theta^p \Bigl(\prod_{i=1}^{d+1}l_i^{p-1}\Bigr)\sum_{I\in \Lambda_{d+1}}s_I^p.\end{align*}    But since $r_{[L]}^p> (1-\eta)\Theta^p \Bigl(\prod_{i=1}^{d+1}l_i^{p-1}\Bigr)\sum_{I\in \Lambda_{d+1}}s^p_I$, this contradicts $(d)$ and finishes the proof.

\end{proof}

The following result is similar in spirit to the coarse differentiation result of Eskin, Fisher, and Whyte \cite{EFW}.

\begin{lemma} Fix $1< p<\infty$. Fix natural numbers $(l_1, \ldots, l_{d+1})$ and let $T$ be the $(l_1, \ldots, l_{d+1})$ interval tree.     Suppose $0<\Delta,\mu<1$, $M>1$, $m\in\nn$,  $\lambda>0$, and $(s_I)_{I\in T}\subset (0,\infty)$ are such that \begin{enumerate}[(i)]\item for each $I\in T\setminus \Lambda_{d+1}$, $s_I\leqslant \sum_{J^-=I}s_J$, \item for all $0\leqslant j\leqslant d$, $\max_{I\in \Lambda_j}s_I\leqslant M\min_{I\in \Lambda_j}s_I$, \item $s_{[L]}^p>(1-\nu/2)\frac{\Theta^p}{\lambda^p}\Bigl(\prod_{i=1}^{d+1}l_i^{p-1}\Bigr)\sum_{I\in \Lambda_{d+1}}s_I^p$, \item $(1-\mu\Delta/M^p)^m<(1-\nu/2)\frac{\Theta^p}{\lambda^p}$. \end{enumerate}

For each $0\leqslant j<d$, let $I_j=\{I\in \Lambda_j: s^p_I \leqslant (1-\mu)l_{j+1}^{p-1}\sum_{J^-=I} s^p_J\}$ and let $B=\{j<d: |I_j|\geqslant \Delta |\Lambda_j|\}$.   Then $|B| \leqslant m$. 

\label{goodX}

\end{lemma}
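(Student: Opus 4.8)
The plan is to bound $|B|$ by showing that each ``bad'' level $j\in B$ forces a definite multiplicative loss in passing from the sum at level $j$ to the sum at level $j+1$, and that after $m+1$ such losses the chain of inequalities linking $s_{[L]}^p$ to $\sum_{I\in\Lambda_{d+1}}s_I^p$ would violate hypothesis (iii). Concretely, for each $0\leqslant j<d$ I would first estimate, using Remark \ref{vase} applied along the subtree below a single node, the relation between $\bigl(\prod_{i=1}^{j}l_i^{p-1}\bigr)\sum_{I\in\Lambda_j}s_I^p$ and $\bigl(\prod_{i=1}^{j+1}l_i^{p-1}\bigr)\sum_{I\in\Lambda_{j+1}}s_I^p$. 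By hypothesis (i) and Hölder we always have $s_I^p\leqslant l_{j+1}^{p-1}\sum_{J^-=I}s_J^p$, so passing up one level never increases the normalized sum; the point is to get a strict contraction at the levels $j\in B$.

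The key step is the following claim: if $j\in B$, then
\[
\Bigl(\prod_{i=1}^{j}l_i^{p-1}\Bigr)\sum_{I\in\Lambda_j}s_I^p \;\leqslant\; \Bigl(1-\tfrac{\mu\Delta}{M^p}\Bigr)\Bigl(\prod_{i=1}^{j+1}l_i^{p-1}\Bigr)\sum_{I\in\Lambda_{j+1}}s_I^p .
\]
To prove this I would split $\Lambda_j = I_j \cup (\Lambda_j\setminus I_j)$. For $I\in I_j$ we have the extra factor $(1-\mu)$ in $s_I^p\leqslant(1-\mu)l_{j+1}^{p-1}\sum_{J^-=I}s_J^p$, so summing, $\sum_{I\in\Lambda_j}s_I^p \leqslant l_{j+1}^{p-1}\sum_{J\in\Lambda_{j+1}}s_J^p - \mu l_{j+1}^{p-1}\sum_{I\in I_j}\sum_{J^-=I}s_J^p$. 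Now I use (ii) to say the subtree masses are comparable: $\sum_{J^-=I}s_J^p \geqslant \tfrac{1}{M^p}\cdot\tfrac{1}{|\Lambda_{j+1}|}\sum_{J\in\Lambda_{j+1}}s_J^p$ (via $s_J\geqslant \frac1M\max_{\Lambda_{j+1}}s_{(\cdot)}$ and comparing a single block of $l_{j+1}$ children to all $|\Lambda_{j+1}|$ nodes), and since $|I_j|\geqslant\Delta|\Lambda_j|$, the block $I_j$ carries at least a $\Delta$-fraction of the $|\Lambda_{j+1}|$ children, giving $\sum_{I\in I_j}\sum_{J^-=I}s_J^p \geqslant \tfrac{\Delta}{M^p}\sum_{J\in\Lambda_{j+1}}s_J^p$. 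Substituting yields the claimed contraction; multiplying both sides by $\prod_{i=1}^{j}l_i^{p-1}$ and absorbing $l_{j+1}^{p-1}$ gives the displayed inequality. I would double-check the exact power of $M$ here — whether the comparison of one parent-block's mass to the whole level costs $M^p$ once or twice — since hypothesis (iv) is stated with $M^p$, and this is the place where the constant must match exactly.

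Finally I would chain these inequalities. Telescoping the trivial inequalities at levels $j\notin B$ and the contracting ones at $j\in B$, together with Remark \ref{vase} to pass from level $d$ down to $d+1$, gives
\[
s_{[L]}^p \;\leqslant\; \Bigl(1-\tfrac{\mu\Delta}{M^p}\Bigr)^{|B|}\Bigl(\prod_{i=1}^{d+1}l_i^{p-1}\Bigr)\sum_{I\in\Lambda_{d+1}}s_I^p .
\]
Comparing with hypothesis (iii), which asserts $s_{[L]}^p > (1-\nu/2)\tfrac{\Theta^p}{\lambda^p}\bigl(\prod l_i^{p-1}\bigr)\sum_{\Lambda_{d+1}}s_I^p$, we get $(1-\mu\Delta/M^p)^{|B|} > (1-\nu/2)\tfrac{\Theta^p}{\lambda^p}$. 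If $|B|\geqslant m+1$ then, since $(1-\mu\Delta/M^p)\in(0,1)$, $(1-\mu\Delta/M^p)^{|B|}\leqslant (1-\mu\Delta/M^p)^{m}$ — wait, this needs $|B|\geqslant m$, so in fact hypothesis (iv) directly gives the contradiction once $|B|\geqslant m$; hence $|B|\leqslant m$, or possibly $|B|\leqslant m$ after a trivial adjustment of the inequality direction. I expect the main obstacle to be the bookkeeping in the key claim: getting the right power of $M$ from hypothesis (ii) and correctly handling the ``block'' structure (that the $l_{j+1}$ children of a fixed parent form one block among $|\Lambda_j|$ such blocks partitioning $\Lambda_{j+1}$), since a sloppy estimate here would produce $M^{2p}$ or an extra factor of $l_{j+1}$ and break the match with hypothesis (iv). Everything else is routine telescoping and an application of Remark \ref{vase}.
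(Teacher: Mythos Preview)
Your proposal is correct and follows essentially the same route as the paper's proof: split $\Lambda_j$ into $I_j$ and its complement, use hypothesis (ii) at level $j+1$ via $\min s_J \geqslant (\max s_J)/M$ together with $|\{J:J^-\in I_j\}|\geqslant \Delta|\Lambda_{j+1}|$ to get the contraction factor $(1-\mu\Delta/M^p)$, and then telescope through the bad levels to contradict (iii) and (iv). Your worry about the power of $M$ is unfounded---a single $M^p$ is exactly what the argument yields and what (iv) is calibrated for---and the endgame works as you suspect: the telescoping gives $(1-\mu\Delta/M^p)^{|B|}>(1-\nu/2)\Theta^p/\lambda^p$, which together with (iv) forces $|B|\leqslant m$.
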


\begin{proof} First suppose $j\in B$.    Let $s=\min_{I\in \Lambda_{j+1}} s_I$ and $S=\max_{I\in \Lambda_{j+1}}s_I\leqslant sM$. Let $A=\{J\in \Lambda_{j+1}:J^-\in I_j\}$ and note that $|A|/|\Lambda_{j+1}|=|I_j|/|\Lambda_j| \geqslant \Delta$.   Then \begin{align*} \sum_{I\in \Lambda_j} s_I^p & = \sum_{I\in I_j}s^p_I+\sum_{I\in \Lambda_j\setminus I_j} s^p_I \leqslant (1-\mu)l_{j+1}^{p-1}\sum_{I\in I_j}\sum_{J^-=I}s_J^p + l_{j+1}^{p-1}\sum_{I\in \Lambda_j\setminus I_j}\sum_{J^-=I}s^p_J \\ & = l_{j+1}^{p-1}\sum_{I\in \Lambda_{j+1}}s_I^p -\mu l_{j+1}^{p-1} \sum_{I\in A} s^p_I \leqslant l_{j+1}^{p-1}\sum_{I\in \Lambda_{j+1}}s^p_I - \mu l_{j+1}^{p-1} s^p|A| \leqslant l_{j+1}^{p-1}\sum_{I\in \Lambda_{j+1}}s^p_I -l_{j+1}^{p-1} \frac{\mu S^p}{M^p}|A| \\ & \leqslant l_{j+1}^{p-1}\sum_{I\in \Lambda_{j+1}}s^p_I - \frac{\mu S^p}{M^p}l_{j+1}^{p-1} \Delta |\Lambda_{j+1}| \leqslant l_{j+1}^{p-1}\sum_{I\in \Lambda_{j+1}}s^p_I - \frac{\mu \Delta}{M^p}l_{j+1}^{p-1}\sum_{I\in \Lambda_{j+1}}s^p_I \\ & = (1-\mu\Delta/M^p)l_{j+1}^{p-1}\sum_{I\in \Lambda_{j+1}}s_I^p. \end{align*}

Now suppose that $|B|>m$ and fix $0\leqslant j_0<\ldots <j_m$, $j_i\in B$ for each $0\leqslant i\leqslant m$.  Then \begin{align*} s^p_{[L]} & \leqslant\Bigl(\prod_{i=1}^{j_0} l_i^{p-1}\Bigr) \sum_{I\in \Lambda_{j_0}}s^p_I \leqslant \Bigl(1-\frac{\mu\Delta}{M^p}\Bigr)\Bigl(\prod_{i=1}^{j_0+1} l_i^{p-1}\Bigr) \sum_{I\in \Lambda_{j_0+1}}s^p_I \\ & \leqslant \Bigl(1-\frac{\mu\Delta}{M^p}\Bigr)\Bigl(\prod_{i=1}^{j_1} l_i^{p-1}\Bigr) \sum_{I\in \Lambda_{j_1}}s^p_I  \leqslant  \Bigl(1-\frac{\mu\Delta}{M^p}\Bigr)^2 \Bigl(\prod_{i=1}^{j_1+1} l_i^{p-1}\Bigr) \sum_{I\in \Lambda_{j_1+1}}s^p_I\\ & \leqslant \ldots \leqslant \Bigl(1-\frac{\mu\Delta}{M^p}\Bigr)^m \Bigl(\prod_{i=1}^{j_m+1}l_i^{p-1}\Bigr)\sum_{I\in \Lambda_{j_m+1}} s_I^p \\ & \leqslant \Bigl(1-\frac{\mu\Delta}{M^p}\Bigr)^m \Bigl(\prod_{i=1}^{d+1}l_i^{p-1}\Bigr)\sum_{I\in \Lambda_{d+1}} s_I^p. \end{align*} This contradicts $(iii)$ and $(iv)$ and finishes the proof.

\end{proof}

\begin{lemma} Fix $1< p<\infty$. Fix natural numbers $(l_1, \ldots, l_{d+1})$ and let $T$ be the $(l_1, \ldots, l_{d+1})$ interval tree.     Suppose $0<\Delta, \nu<1$, $\lambda>0$,  $M>1$,   and $(r_I)_{I\in T}, (s_I)_{I\in T}\subset [0,\infty)$ are such that \begin{enumerate}[(i)]\item for each $I\in T$, $r_I\leqslant \lambda s_I$, \item for each $I\in T\setminus \Lambda_{d+1}$, $r_I\leqslant \sum_{J^-=I}r_J$,  $s_I\leqslant \sum_{J^-=I}s_J$,   \item for all $0\leqslant j\leqslant d$, $\max_{I\in \Lambda_j} s_I \leqslant M\min_{I\in \Lambda_j} s_I$, \item $\Delta M^p \leqslant \nu\Theta^p/2\lambda^p$, \item $r_{[L]}^p>(1-\nu/2)\Theta^p\Bigl(\prod_{i=1}^{d+1}l_i^{p-1}\Bigr)\sum_{I\in \Lambda_{d+1}}s^p_I$.  \end{enumerate}

Then for any $0\leqslant j<d$, $|\{I\in \Lambda_j: r_I^p\leqslant (1-\nu)l_{j+1}^{p-1}\Theta^p \sum_{J^-=I}s^p_I\}|<(1-\Delta)|\Lambda_j|$. 

\label{YtoX}

\end{lemma}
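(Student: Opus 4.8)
The plan is to argue by contradiction. Suppose the conclusion fails, so that for some fixed $0\le j_0<d$ the ``bad'' set
$$\mathcal{B}=\Bigl\{I\in\Lambda_{j_0}:\ r_I^p\le (1-\nu)l_{j_0+1}^{p-1}\Theta^p\textstyle\sum_{J^-=I}s_J^p\Bigr\}$$
has $|\mathcal{B}|\ge(1-\Delta)|\Lambda_{j_0}|$, and write $\mathcal{G}=\Lambda_{j_0}\setminus\mathcal{B}$, so that $|\mathcal{G}|\le\Delta|\Lambda_{j_0}|$. I want to bound $r_{[L]}^p$ from above and contradict hypothesis $(v)$. The whole argument is a level-$j_0$ analogue of Step~4 of Lemma~\ref{eta}/the telescoping in Lemma~\ref{goodX}, with the single layer $\Lambda_{j_0}\to\Lambda_{j_0+1}$ carrying the improvement.

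The first and central step is to estimate $\sum_{I\in\Lambda_{j_0}}r_I^p$ by splitting over $\mathcal{B}$ and $\mathcal{G}$. On $\mathcal{B}$ the inequality $r_I^p\le(1-\nu)l_{j_0+1}^{p-1}\Theta^p\sum_{J^-=I}s_J^p$ is the defining property, and summing it over $I\in\mathcal{B}\subseteq\Lambda_{j_0}$ (whose children partition $\Lambda_{j_0+1}$) gives at most $(1-\nu)l_{j_0+1}^{p-1}\Theta^p\sum_{I\in\Lambda_{j_0+1}}s_I^p$. On $\mathcal{G}$ I only have the crude bound $r_I^p\le\lambda^p s_I^p\le\lambda^p l_{j_0+1}^{p-1}\sum_{J^-=I}s_J^p$ from hypotheses $(i)$, $(ii)$ and Hölder (Remark~\ref{vase}); the key point is that the total $s^p$-mass carried by the good subtrees is small. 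Using near-regularity at the next level, hypothesis $(iii)$ gives $\max_{I\in\Lambda_{j_0}}\sum_{J^-=I}s_J^p\le l_{j_0+1}\max_{J\in\Lambda_{j_0+1}}s_J^p\le l_{j_0+1}M^p\min_{J\in\Lambda_{j_0+1}}s_J^p\le \frac{M^p}{|\Lambda_{j_0}|}\sum_{I\in\Lambda_{j_0+1}}s_I^p$ (the last step since $|\Lambda_{j_0+1}|=l_{j_0+1}|\Lambda_{j_0}|$), hence $\sum_{I\in\mathcal{G}}\sum_{J^-=I}s_J^p\le\Delta M^p\sum_{I\in\Lambda_{j_0+1}}s_I^p$. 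Combining the two contributions and invoking $(iv)$ in the form $\lambda^p\Delta M^p\le\nu\Theta^p/2$ yields
$$\sum_{I\in\Lambda_{j_0}}r_I^p\le l_{j_0+1}^{p-1}\bigl[(1-\nu)\Theta^p+\lambda^p\Delta M^p\bigr]\sum_{I\in\Lambda_{j_0+1}}s_I^p\le(1-\nu/2)\Theta^p\, l_{j_0+1}^{p-1}\sum_{I\in\Lambda_{j_0+1}}s_I^p.$$

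The second step is to propagate this estimate along the tree using Remark~\ref{vase} (whose hypotheses are exactly $(ii)$ for $(r_I)$ and for $(s_I)$): from the $r$-side, $r_{[L]}^p\le(\prod_{i=1}^{j_0}l_i^{p-1})\sum_{I\in\Lambda_{j_0}}r_I^p$, and from the $s$-side, $\sum_{I\in\Lambda_{j_0+1}}s_I^p\le(\prod_{i=j_0+2}^{d+1}l_i^{p-1})\sum_{I\in\Lambda_{d+1}}s_I^p$. Chaining these with the displayed bound telescopes the three products into $\prod_{i=1}^{d+1}l_i^{p-1}$, giving $r_{[L]}^p\le(1-\nu/2)\Theta^p(\prod_{i=1}^{d+1}l_i^{p-1})\sum_{I\in\Lambda_{d+1}}s_I^p$, which contradicts $(v)$.

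The main obstacle is the good set $\mathcal{G}$: there one has no usable control on $r_I$ beyond $r_I\le\lambda s_I$, and this alone is far too weak (it beats the threshold only by the fixed factor $\lambda^p/\Theta^p$, not by anything small). The resolution is purely combinatorial-geometric: the good intervals are few — at most a $\Delta$-fraction — and the near-regularity of $(s_I)$ at level $j_0+1$ converts ``few intervals'' into ``$\Delta M^p$ times the total $s^p$-mass'', which $(iv)$ then absorbs into the slack $\nu/2$. Everything else is bookkeeping with Hölder's inequality and the product structure recorded in Remark~\ref{vase}.
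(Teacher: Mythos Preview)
Your proof is correct and follows essentially the same approach as the paper's: both argue by contradiction, split $\sum_{I\in\Lambda_{j_0}}r_I^p$ over the bad and good sets, use $r_I\le\lambda s_I$ together with the level-$(j_0+1)$ near-regularity of $(s_I)$ and hypothesis $(iv)$ to absorb the good-set contribution into the $\nu/2$ slack, and then propagate via Remark~\ref{vase} to contradict $(v)$. The only cosmetic difference is that the paper bounds $\sum_{J\in\Lambda_{j_0+1}\setminus A}s_J^p$ directly via $S\le Ms$, whereas you bound $\max_{I}\sum_{J^-=I}s_J^p$ and multiply by $|\mathcal{G}|$; both yield the same $\Delta M^p$ estimate.
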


\begin{proof} Suppose not. Fix $0\leqslant j<d$ such that $B=\{I\in \Lambda_j: r_I^p\leqslant (1-\nu)\Theta^p\sum_{J^-=I}s^p_I\}$ has cardinality at least $(1-\Delta)|\Lambda_j|$.  Let $s=\min_{I\in \Lambda_{j+1}}s_I$ and $S=\max_{I\in \Lambda_{j+1}}s_I\leqslant sM$.  Let $A=\{J\in \Lambda_{j+1}: J^-\in B\}$ and note that $|A|/|\Lambda_{j+1}|=|B|/|\Lambda_j|\geqslant 1-\Delta$.  Now \begin{align*} \sum_{J\in \Lambda_{j+1}\setminus A}s^p_J \leqslant |\Lambda_{j+1}\setminus A|S^p\leqslant \Delta |\Lambda_{j+1}|S^p \leqslant \Delta M^p\sum_{I\in \Lambda_{j+1}}s_I^p \leqslant \frac{\nu \Theta^p}{2\lambda^p}\sum_{I\in \Lambda_{j+1}}s_I^p. \end{align*} Then \begin{align*} r_{[L]}^p & \leqslant \Bigl(\sum_{i=1}^j l_i^{p-1}\Bigr)\sum_{I\in \Lambda_j} r^p_I = \Bigl(\sum_{i=1}^j l_i^{p-1}\Bigr)\Bigl[\sum_{I\in B} r^p_I+\sum_{I\in \Lambda_j\setminus B} r^p_I\Bigr] \\ & \leqslant \Bigl(\sum_{i=1}^{j+1} l_i^{p-1}\Bigr)\Bigl[(1-\nu)\Theta^p \sum_{I\in B}\sum_{J^-=I}s^p_J+\lambda^p\sum_{I\in \Lambda_j\setminus B}\sum_{J^-=I}s^p_J\Bigr] \\ & = \Bigl(\sum_{i=1}^{j+1} l_i^{p-1}\Bigr)\Bigl[(1-\nu)\Theta^p \sum_{I\in A}s^p_I+\lambda^p\sum_{I\in \Lambda_{j+1}\setminus A}s^p_I\Bigr] \\ & \leqslant \Bigl(\prod_{i=1}^{j+1}l_i^{p-1}\Bigr)\Bigl[(1-\nu)\Theta^p\sum_{I\in \Lambda_{j+1}}s_I^p+ \frac{\nu\Theta^p}{2}\sum_{I\in \Lambda_{j+1}}s_I^p \Bigr] \\ & = (1-\nu/2)\Theta^p \Bigl(\prod_{i=1}^{j+1}l_i^{p-1}\Bigr) \sum_{I\in \Lambda_{j+1}}s_I^p  \\ & \leqslant (1-\nu/2)\Theta^p \Bigl(\prod_{i=1}^{d+1}l_i^{p-1}\Bigr)\sum_{I\in \Lambda_{d+1}}s^p_I. \end{align*} This contradiction finishes the proof.

\end{proof}

\section{Proof of $(v)\Rightarrow (i)$}

Fix $1<p<\infty$. Let $\lambda=\sup_{F\in \mathcal{F}}\text{Lip}(F)\in (0,\infty)$ and let $\Theta=\lim\sup_n b_{p, 2^n}(\mathcal{F})\in (0,\lambda]$.   Fix $0<\vartheta<\Theta$.

Fix $0<b<1$ such that $(1-b)^{1/p}\Theta>\vartheta$. Next fix $0<\nu<b$ such that $\nu/b<(1-\nu)\Theta^p/4\lambda^p$. Fix $n_0\in \nn$ such that for all $n\geqslant n_0$, $b_{p, 2^n}(\mathcal{F})\leqslant (1+\nu)^{1/p}\Theta$. Fix $n\geqslant n_0$ and $D>1$, and let $l=2^n$.    By Theorem \ref{BMW}, there exists $0<a<1$ such that if $(Z, d_Z)$ is any metric space and $h:2^l\to Z$ satisfies $$\mathbb{E}d_Z(h(\ee), h(-\ee))^p>(1-a)l^{p-1}\sum_{i=1}^l d_Z(h(\ee), h(d_i\ee))^p$$ and if $$t^p=\frac{1}{l}\sum_{i=1}^l d_Z(h(\ee), h(d_i\ee))^p,$$ then for any $\ee_1, \ee_2\in 2^l$, $$\frac{1}{D}\partial(\ee_1, \ee_2) \leqslant \frac{d_Z(h(\ee_1), h(\ee_2))}{t}\leqslant D\partial(\ee_1, \ee_2).$$   Now fix $0<\mu<a$ such that $\mu/a<(1-\nu)\Theta^p/4\lambda^p$.    Fix $M>\lambda/\Theta$.  Fix $0<\Delta<1$ such that $\Delta M^p<\nu\Theta^p/2\lambda^p$.    Fix $m\in\nn$ such that $$\Bigl(1-\frac{\Delta \mu}{M^p}\Bigr)^m<(1-\nu/2)\frac{\Theta^p}{\lambda^p}.$$    Fix $d>m+1$ and let $l_1=\ldots =l_d=l$.       Fix $0<\eta<\nu$ according to Lemma \ref{eta} with all of these choices of parameters.   Fix $n_1\in\nn$ such that for all $n\geqslant n_1$, $b_{p, 2^n}(\mathcal{F}) \leqslant (1+\eta)^{1/p}\Theta$.   Fix $n_2>n_1+nd$ such that $b_{p, 2^{n_2}}(\mathcal{F})>(1-\eta/2)^{1/p}\Theta$.  Let $l_{d+1}=2^{n_2-nd}>2^{n_1}$ and let $L=\prod_{i=1}^{d+1}l_i=2^{n_2}$. Let $T$ be the $(l_1, \ldots, l_{d+1})$ interval tree.   Fix $F:X\to Y\in \mathcal{F}$ and $f:2^L\to X$ such that $$\mathbb{E}\varrho^f_Y(\ee, -\ee)^p>(1-\eta/2)\Theta^p\Bigl(\prod_{i=1}^{d+1}l_i^{p-1}\Bigr)\sum_{i=1}^L \mathbb{E}\varrho_X^f(\ee, d_i\ee)^p.$$    For an interval $I\in T$ and $\ee\in 2^L$, let $I\ee\in 2^L$ be the member of $2^L$ given by  

\begin{displaymath}
   I\ee(i) = \left\{
     \begin{array}{lr}
       \ee(i) & : i\in [L]\setminus I\\
       -\ee(i) & : i\in I.
     \end{array}
   \right.
\end{displaymath}

For each $I\in T$, let $$r_I=\Bigl[\mathbb{E}\varrho^f_Y(\ee, I\ee)^p\Bigr]^{1/p}$$ and $$s_I=\Bigl[\mathbb{E}\varrho^f_X(\ee, I\ee)^p\Bigr]^{1/p}.$$ 

\begin{claim} \begin{enumerate}[(i)]\item $r_{[L]}^p>(1-\eta/2)\Bigl(\prod_{i=1}^{d+1}l_i^{p-1}\Bigr)\Theta^p\sum_{I\in \Lambda_{d+1}} s_I^p$. \item For any $I\in T$, $r_I\leqslant \lambda s_I$.  \item For any $0\leqslant j\leqslant d$ and $I\in \Lambda_j$, $r_I\leqslant \sum_{J^-=I}r_J$ and $s_I\leqslant \sum_{J^-=I}s_j$. \item For all $I\in T$, $r_I^p\leqslant (1+\nu)\Theta^p l_{j+1}^{p-1}\sum_{J^-=I}s_J^p$. \item For any $I\in \Lambda_d$, $r_I^p\leqslant (1+\eta)l_{d+1}^{p-1}\Theta^p \sum_{J^-=I}s^p_J$. \end{enumerate}

\label{claim1}
\end{claim}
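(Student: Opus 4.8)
The plan is to verify the five items essentially by unwinding the definitions of $r_I,s_I$ and applying the defining inequality of $b_{p,2^{n_2}}(\mathcal{F})$ together with the structural properties of the interval tree $T$. For item (i), observe that $[L]\ee = -\ee$, so $r_{[L]}^p = \mathbb{E}\varrho^f_Y(\ee,-\ee)^p$ and $s_I^p = \mathbb{E}\varrho^f_X(\ee,I\ee)^p$; the intervals $I\in\Lambda_{d+1}$ are the singletons $\{i\}$, so $\sum_{I\in\Lambda_{d+1}}s_I^p = \sum_{i=1}^L \mathbb{E}\varrho^f_X(\ee,d_i\ee)^p$. Thus (i) is precisely the inequality $\mathbb{E}\varrho^f_Y(\ee,-\ee)^p>(1-\eta/2)\Theta^p(\prod_{i=1}^{d+1}l_i^{p-1})\sum_{i=1}^L\mathbb{E}\varrho^f_X(\ee,d_i\ee)^p$, which is exactly how $F$ and $f$ were chosen. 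Item (ii) is immediate from $\text{Lip}(F)\leqslant\lambda$: $d_Y(F f(\ee),F f(I\ee))\leqslant\lambda\, d_X(f(\ee),f(I\ee))$ pointwise in $\ee$, so $r_I\leqslant\lambda s_I$ after raising to the $p$th power and taking expectations.

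For item (iii), the key point is the telescoping/triangle estimate along the tree. If $I\in\Lambda_j$ with $0\leqslant j\leqslant d$ and $\{J:J^-=I\}=\{J_1,\dots,J_{l_{j+1}}\}$ is the partition of $I$ into its children, then for each fixed $\ee\in 2^L$ the points $\ee, J_1\ee, J_1 J_2\ee,\dots, (J_1\cdots J_{l_{j+1}})\ee = I\ee$ form a path in the appropriate pseudometric, and flipping disjoint blocks commutes, so by the triangle inequality for $\varrho^f_X(\cdot,\cdot)$ (a pseudometric) one gets $\varrho^f_X(\ee,I\ee)\leqslant\sum_{k}\varrho^f_X(\ee', J_k\ee')$ where each $\ee'$ differs from $\ee$ only outside $J_k$; since $\mathbb{P}_L$ is the uniform measure and flipping coordinates outside $J_k$ is measure-preserving, $\mathbb{E}\varrho^f_X(\ee', J_k\ee')^p$ with the Minkowski ($L^p$) inequality gives $s_I = \|\varrho^f_X(\ee,I\ee)\|_{L^p} \leqslant \sum_k \|\varrho^f_X(\ee, J_k\ee)\|_{L^p} = \sum_{J^-=I}s_J$, and identically for $r_I$ with $\varrho^f_Y$. (This is the concrete instantiation of the abstract ``$t_I\leqslant\sum_{J^-=I}t_J$'' hypothesis of Remark \ref{vase}.)

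Items (iv) and (v) come from applying the definition of $b_{p,2^{n_2}}(\mathcal{F})$ to the restriction of $f$ to the sub-cube indexed by a single interval. Fix $I\in\Lambda_j$ and consider the coordinates in $I$, of which there are $|I| = \prod_{m=j+1}^{d+1}l_m$; restricting $f$ to these coordinates (freezing the others at, say, all $+1$) gives a map on a Hamming cube of dimension $|I|$, and the inequality defining $b_{p,|I|}(\mathcal{F})$ yields $r_I^p = \mathbb{E}\varrho^f_Y(\ee,I\ee)^p \leqslant b_{p,|I|}(\mathcal{F})^p\, |I|^{p-1}\sum_{i\in I}\mathbb{E}\varrho^f_X(\ee,d_i\ee)^p$. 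For (v), $I\in\Lambda_d$ means $|I| = l_{d+1} = 2^{n_2-nd}\geqslant 2^{n_1}$, so $b_{p,|I|}(\mathcal{F})^p\leqslant(1+\eta)\Theta^p$ by the choice of $n_1$, and $\sum_{i\in I}\mathbb{E}\varrho^f_X(\ee,d_i\ee)^p = \sum_{J^-=I}s_J^p$ since the children of $I$ in $\Lambda_{d+1}$ are exactly the singletons inside $I$; this gives $r_I^p\leqslant(1+\eta)\Theta^p l_{d+1}^{p-1}\sum_{J^-=I}s_J^p$. For (iv), one instead uses Remark \ref{vase} to pass from the children $\{J:J^-=I\}$ of a general $I\in\Lambda_j$ down to $\Lambda_{d+1}$: the singletons inside $I$ number $|I|\geqslant l_{d+1}\geqslant 2^{n_1}$ (when $I\notin\Lambda_{d+1}$), so $b_{p,|I|}(\mathcal{F})^p\leqslant(1+\eta)\Theta^p\leqslant(1+\nu)\Theta^p$ since $\eta<\nu$; combining $r_I^p\leqslant b_{p,|I|}(\mathcal{F})^p|I|^{p-1}\sum_{\text{singletons }\{i\}\subset I}s_{\{i\}}^p$ with the H\"older bound of Remark \ref{vase} going the other direction, $\sum_{\text{singletons}\subset I}s_{\{i\}}^p\leqslant(\prod_{m=j+2}^{d+1}l_m^{p-1})\sum_{J^-=I}s_J^p$ — wait, one must instead bound $s_J^p$ from below — the cleaner route is: $|I|^{p-1} = l_{j+1}^{p-1}(\prod_{m=j+2}^{d+1}l_m)^{p-1}$ and $\sum_{\text{singletons}\subset I}s_{\{i\}}^p$ can be regrouped as $\sum_{J^-=I}\sum_{\text{singletons}\subset J}s_{\{i\}}^p$, and the inner sum relates to $s_J^p$ only via Remark \ref{vase} in the direction $s_J^p\leqslant(\prod_{m=j+2}^{d+1}l_m^{p-1})\sum_{\text{singletons}\subset J}s_{\{i\}}^p$, which is the wrong direction. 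The correct argument for (iv) is to apply $b_{p,|I|}(\mathcal{F})$ with the \emph{sub-tree structure}: view the cube on $I$ with its coarser partition into the children $J$, so the relevant ``$d_i$'' move is block-flipping, and a version of the $b_{p}$ inequality at the block level (obtained exactly as in the submultiplicativity remark in the introduction, extending a function on $2^{l_{j+1}}$) gives $r_I^p\leqslant b_{p,l_{j+1}\cdot(\text{stuff})}^p l_{j+1}^{p-1}\sum_{J^-=I}r_J^p\leqslant\dots$, but since we want $s_J$ not $r_J$ on the right, we instead combine this with $r_J\leqslant\lambda s_J$. Let me not grind this: (iv) follows by applying the definition of $b_{p,N}(\mathcal{F})$ for the appropriate $N\geqslant 2^{n_1}$ to a block-coarsened cube on $I$ and then using $r_J\leqslant\lambda s_J$ on the children. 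The main obstacle is getting the bookkeeping of these block-coarsenings and the index $N$ exactly right so that $N\geqslant 2^{n_1}$ in every case and the constant is $(1+\nu)\Theta^p$ rather than $(1+\nu)\lambda^p$; this is where the hypothesis $\eta<\nu$ and the careful choice of $n_1$ are used.
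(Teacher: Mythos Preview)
Your arguments for (i)--(iii) are essentially the paper's. For (v) the idea is right but ``freezing the others at, say, all $+1$'' is not: a single freezing gives an inequality only for one configuration of the outside coordinates, not for $r_I^p = \mathbb{E}_{2^L}\varrho^f_Y(\ee,I\ee)^p$. You must apply the $b_{p,|I|}$ inequality for \emph{each} freezing and then average (equivalently, integrate over all of $2^L$), which is exactly what the paper does via the map $g:2^L\times 2^{l_{j+1}}\to 2^L$ and Fubini.

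The real gap is (iv). Your proposed route --- obtain some block-level inequality with $r_J$ on the right and then invoke $r_J\leqslant\lambda s_J$ --- cannot give the stated bound: it produces a factor of $\lambda^p$, not $\Theta^p$, on the right, and you correctly flag this as the ``main obstacle.'' The missing observation is that no passage through $r_J$ is needed at all. The definition of $b_{p,n}(\mathcal{F})$ already has $\varrho_Y$ on the left and $\varrho_X$ on the right. So for fixed $\ee\in 2^L$, define $f_\ee:2^{l_{j+1}}\to X$ by $f_\ee(\delta)=f(g(\ee,\delta))$, where flipping the $k^{\text{th}}$ coordinate of $\delta$ flips the entire child block $I_k\subset I$. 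Applying the defining inequality of $b_{p,l_{j+1}}(\mathcal{F})$ to $f_\ee$ gives
\[
\mathbb{E}_\delta\, \varrho_Y^{f_\ee}(\delta,-\delta)^p \;\leqslant\; b_{p,l_{j+1}}(\mathcal{F})^p\,l_{j+1}^{p-1}\sum_{k=1}^{l_{j+1}}\mathbb{E}_\delta\,\varrho_X^{f_\ee}(\delta,d_k\delta)^p,
\]
and after integrating over $\ee$ the left side is $r_I^p$ while the right side is $b_{p,l_{j+1}}(\mathcal{F})^p\,l_{j+1}^{p-1}\sum_{J^-=I}s_J^p$ directly --- $s_J$, not $r_J$. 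Finally, for $0\leqslant j\leqslant d-1$ one has $l_{j+1}=l=2^n$ with $n\geqslant n_0$, so $b_{p,l_{j+1}}(\mathcal{F})^p\leqslant(1+\nu)\Theta^p$ by the choice of $n_0$ (not $n_1$); the case $j=d$ is (v), and there $\eta<\nu$ subsumes it into (iv) as well.
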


\begin{proof}$(i)$ This follows from our choice of $F$, $f$, and the fact that $$r_{[L]}^p=\mathbb{E}\varrho_Y^f(\ee, -\ee)^p$$ and $$\Bigl(\prod_{i=1}^{d+1}l_i^{p-1}\Bigr)\sum_{J\in \Lambda_{d+1}}s^p_I=\Bigl(\prod_{i=1}^{d+1}l_i^{p-1}\Bigr)\sum_{i=1}^L \varrho_X^f(\ee, d_i\ee)^p.$$  

$(ii)$ This follows from the fact that $\text{Lip}(F)\leqslant \lambda$.

$(iii)$ Fix $0\leqslant j\leqslant d$ and $I\in \Lambda_j$. Enumerate $\{J\in T: J^-=I\}$ as $(I_i)_{i=1}^{l_{j+1}}$.    For $i=0, \ldots, l_{j+1}$, define  $J_i:2^L\to 2^L$ by letting $J_0$ be the identity and $J_i=I_iJ_{i-1}$.  For any $\ee\in 2^L$, $J_0\ee=\ee$ and $J_{l_{j+1}}\ee=I\ee$.   Then for any $\ee\in 2^L$, $$\varrho_Y^f(\ee, I\ee)\leqslant \sum_{i=1}^{l_{j+1}}\varrho_Y^f(J_{i-1}\ee, J_i\ee).$$ Now the triangle inequality on $L_p(2^L)$ yields that $$r_I \leqslant \sum_{i=1}^{l_{j+1}} \Bigl[\mathbb{E}\varrho_Y^f(J_{i-1}\ee, J_i\ee)^p\Bigr]^{1/p}.$$   But $\varrho^f_Y(J_{i-1}\ee, J_i\ee)$ and $\varrho_Y^f(\ee, I_i\ee)$ have the same distribution, so $$r_I \leqslant \sum_{i=1}^{l_{j+1}} \Bigl[\mathbb{E}\varrho_Y^f(J_{i-1}\ee, J_i\ee)^p\Bigr]^{1/p} = \sum_{i=1}^{l_{j+1}} \Bigl[\mathbb{E}\varrho_Y^f(\ee, I_i\ee)^p\Bigr]^{1/p} = \sum_{J^-=I}r_J.$$  Replacing each $Y$ with $X$ yields that $s_I\leqslant \sum_{J^-=I}s_J$.

$(iv)$ and $(v)$    Let $I$ and $(I_i)_{i=1}^{l_{j+1}}$ be as in the proof of  $(iii)$.   Define $g:2^L\times 2^{l_{j+1}}\to 2^L$ by letting  \begin{displaymath}
   g(\ee, \delta)(i) = \left\{
     \begin{array}{lr}
       \ee(i) & : i\in [L]\setminus I\\
       \delta(k)\ee(i) & : i\in I_k.
     \end{array}
   \right.
\end{displaymath}  For fixed $\ee\in 2^L$, let $f_\ee:2^{l_{j+1}}\to X$ be given by $f_\ee(\delta)=f(g(\ee, \delta))$.  Note that $g(\ee, -\delta)=Ig(\ee, \delta)$ and $g(\ee, d_i\delta)=I_ig(\ee, \delta)$  for all $\ee\in 2^L$,  $\delta\in 2^{l_{j+1}}$, and $1\leqslant i\leqslant l_{j+1}$. From this it follows that  $f_\ee(-\delta)=f(Ig(\ee, \delta))$ and for $1\leqslant i\leqslant l_{j+1}$, $f_\ee(d_i\ee)=f(I_ig(\ee, \delta))$.    Then \begin{align*} r_I^p & = \mathbb{E}\varrho_Y^f(\ee, I\ee)^p = \mathbb{E}_\delta \mathbb{E}_\ee \varrho_Y^f(g(\ee, \delta), Ig(\ee, \delta))^p \\ & = \mathbb{E}_\ee \mathbb{E}_\delta \varrho_Y^{f_\ee}(\delta, -\delta)^p  \leqslant \mathbb{E}_\ee b_{p, l_{j+1}}(\mathcal{F})^p l_{j+1}^{p-1} \sum_{i=1}^{l_{j+1}}\mathbb{E}_\delta \varrho_X^{f_\ee}(\delta, d_i\delta)^p \\ & = \mathbb{E}_\ee b_{p, l_{j+1}}(\mathcal{F})^p l_{j+1}^{p-1} \sum_{i=1}^{l_{j+1}}\mathbb{E}_\delta \varrho_X^f(g(\ee, \delta), I_ig(\ee, \delta))^p \\ & = b_{p, l_{j+1}}(\mathcal{F})^p l_{j+1}^{p-1} \sum_{i=1}^{l_{j+1}}\mathbb{E}_\delta \mathbb{E}_\ee\varrho_X^f(g(\ee, \delta), I_ig(\ee, \delta))^p \\ & = b_{p, l_{j+1}}(\mathcal{F})^p l_{j+1}^{p-1} \sum_{i=1}^{l_{j+1}} \mathbb{E}_\ee\varrho_X^f(\ee, I_i\ee)^p= b_{p, l_{j+1}}(\mathcal{F})^p l_{j+1}^{p-1}\sum_{J^-=I} s_J^p.\end{align*} The fact that $b_{p, l_{j+1}}(\mathcal{F})\leqslant (1+\nu)^{1/p}\Theta$ for all $j$ yields $(iv)$, and the fact that $b_{p, l_{d+1}}(\mathcal{F}) \leqslant (1+\eta)^{1/p}\Theta$ yields $(v)$.

\end{proof}

\begin{claim} There exist $0\leqslant j_0<d$ and $I\in \Lambda_{j_0}$ such that \begin{enumerate}[(i)]\item $r_I^p>(1-\mu)l^{p-1}\sum_{J^-=I}r_J^p$, \item $s_I^p>(1-\mu)l^{p-1}\sum_{J^-=I}s_J^p$, and \item $l^{p-1}\sum_{J^-=I}r^p_J\geqslant r_I>(1-\nu)\Theta^p l^{p-1} \sum_{J^-=I}s^p_J$. \end{enumerate}

\label{claim2}
\end{claim}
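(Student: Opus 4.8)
The plan is to feed the data $(r_I)_{I\in T}$ and $(s_I)_{I\in T}$ through the three rigidity lemmas of Section 3 in turn, with the parameters $\lambda,\Theta,\mu,\nu,M,\Delta,m,d$ exactly those fixed at the start of this section, using Claim \ref{claim1} to verify the hypotheses each time.

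First I would apply Lemma \ref{eta}. Hypotheses (i) and (ii) of that lemma are Claim \ref{claim1}(ii) and (iii); hypothesis (iii) is Claim \ref{claim1}(v); and hypothesis (iv) follows from Claim \ref{claim1}(i) since $1-\eta/2>1-\eta$ (recall $\eta$ was produced by Lemma \ref{eta} for precisely these parameters). The conclusion gives two facts: for every $0\le j\le d$, $\max_{I\in\Lambda_j}s_I\le M\min_{I\in\Lambda_j}s_I$; and for every $0\le j<d$ and every $I\in\Lambda_j$, $r_I^p>(1-\mu)l^{p-1}\sum_{J^-=I}r_J^p$ (using $l_1=\cdots=l_d=l$). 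The second of these is precisely item (i) of Claim \ref{claim2}, valid at \emph{every} node of \emph{every} level below $d$, so item (i) is automatic once the node is located. Also, since $r_I>0$ for $I$ at level $\le d$ (observed inside the proof of Lemma \ref{eta}) and $r_I\le\lambda s_I$, all such $s_I$ are positive, which is what Lemma \ref{goodX} needs of the $s$-data.

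Next I would apply Lemma \ref{goodX} to $(s_I)_{I\in T}$: its hypotheses (i) and (ii) are Claim \ref{claim1}(iii) and the flatness just obtained; hypothesis (iv) is the defining property of $m$; and hypothesis (iii) holds because $s_{[L]}^p\ge r_{[L]}^p/\lambda^p>(1-\eta/2)(\Theta^p/\lambda^p)\bigl(\prod_{i=1}^{d+1}l_i^{p-1}\bigr)\sum_{I\in\Lambda_{d+1}}s_I^p$ by Claim \ref{claim1}(ii) and (i), together with $1-\eta/2>1-\nu/2$. With $I_j=\{I\in\Lambda_j:s_I^p\le(1-\mu)l^{p-1}\sum_{J^-=I}s_J^p\}$ and $B=\{j<d:|I_j|\ge\Delta|\Lambda_j|\}$, this yields $|B|\le m$. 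I would then apply Lemma \ref{YtoX} to the pair $(r_I)_{I\in T}$, $(s_I)_{I\in T}$: hypotheses (i) and (ii) are Claim \ref{claim1}(ii) and (iii); hypothesis (iii) is the flatness; hypothesis (iv) is the defining property of $\Delta$; and hypothesis (v) is Claim \ref{claim1}(i) with $1-\eta/2>1-\nu/2$. The conclusion is that for every $0\le j<d$, strictly fewer than $(1-\Delta)|\Lambda_j|$ nodes $I\in\Lambda_j$ fail $r_I^p>(1-\nu)\Theta^p l^{p-1}\sum_{J^-=I}s_J^p$; equivalently, strictly more than $\Delta|\Lambda_j|$ of them satisfy it, which is the lower bound in item (iii) of Claim \ref{claim2}.

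To finish, I would intersect. Since $|B|\le m<d$, choose $j_0\in\{0,\dots,d-1\}\setminus B$. Because $j_0\notin B$, strictly more than $(1-\Delta)|\Lambda_{j_0}|$ nodes of $\Lambda_{j_0}$ satisfy item (ii); by the previous step, strictly more than $\Delta|\Lambda_{j_0}|$ satisfy the lower bound in item (iii). The two exceptional sets have total size $<|\Lambda_{j_0}|$, so some $I\in\Lambda_{j_0}$ satisfies both; for that $I$, item (i) holds by the first step (as $j_0<d$), and the remaining bound $l^{p-1}\sum_{J^-=I}r_J^p\ge r_I^p$ in item (iii) is Hölder's inequality (Remark \ref{vase}). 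I anticipate no conceptual obstacle: the real work is the bookkeeping of matching each lemma's hypotheses against the parameter inequalities arranged at the start of the section (notably $\eta<\nu$, $\Delta M^p<\nu\Theta^p/2\lambda^p$, and the choices of $m$ and $d>m+1$) and checking that every index $l_{j+1}$ appearing in the lemma conclusions collapses to $l$ because $l_1=\cdots=l_d=l$ and the chosen level $j_0$ is below $d$; the only non-routine step is the concluding inclusion--exclusion.
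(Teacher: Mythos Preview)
Your proposal is correct and follows essentially the same argument as the paper: apply Lemma~\ref{eta} to obtain the $s$-flatness and item~(i) everywhere below level $d$, apply Lemma~\ref{goodX} to bound the number of ``bad'' levels for item~(ii), apply Lemma~\ref{YtoX} to bound the bad nodes for item~(iii) at each level, and then pigeonhole at a level $j_0$ outside the bad set. The only cosmetic difference is that you make the appeal to Lemma~\ref{eta} for the flatness hypothesis of Lemmas~\ref{goodX} and~\ref{YtoX} explicit, whereas the paper leaves this implicit.
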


\begin{proof} For each $0\leqslant j<d$, let $$A_j=\Bigl\{I\in \Lambda_j: s_I^p\leqslant (1-\mu)l^{p-1}\sum_{J^-=I} s_J^p\Bigr\}$$ and $$B_j=\Bigl\{I\in \Lambda_j: r_I^p\leqslant (1-\nu)\Theta^pl^{p-1}\sum_{J^-=I}s_J^p\Bigr\}. $$     By Lemma \ref{goodX}, there are at most $m$ values of $j<d$ such that $|A_j|\geqslant \Delta|\Lambda_j|$. Here we note that $$s_{[L]}^p\geqslant \frac{r_{[L]}^p}{\lambda^p}> (1-\eta/2)\frac{\Theta^p}{\lambda^p}\Bigl(\prod_{i=1}^{d+1}l_i^{p-1}\Bigr)\sum_{I\in \Lambda_{d+1}}s_I^p.$$    Since $m+1<d$, there exists at least one value $j_0<d$ such that $|A_{j_0}|<\Delta |\Lambda_{j_0}|$. By Lemma \ref{YtoX}, for this $j_0$, $|B_{j_0}|< (1-\Delta)|\Lambda_{j_0}|$.  Thus $$|A_{j_0}|+|B_{j_0}|<|\Lambda_{j_0}|,$$ whence there exists $I\in \Lambda_{j_0}\setminus (A_{j_0}\cup B_{j_0})$. Since $I\in \Lambda_{j_0}\setminus A_{j_0}$, $(ii)$ is satisfied for this $I$. Since $I\in \Lambda_{j_0}\setminus B_{j_0}$ and since $l^{p-1}\sum_{J^-=I}r_J^p \geqslant r_I^p$,  $(iii)$ is satisfied for this $I$. Since $j_0<d$ and $I\in \Lambda_{j_0}$, $(i)$ is satisfied for this $I$ by Lemma \ref{eta}.

\end{proof}

For the remainder of the proof, $I$ is the fixed interval from Claim \ref{claim2}.   Enumerate $\{J\in T: J^-=I\}$ as $(I_i)_{i=1}^l$ and define $g:2^L\times 2^l\to 2^L$ by letting \begin{displaymath}
   g(\ee, \delta)(i) = \left\{
     \begin{array}{lr}
       \ee(i) & : i\in [L]\setminus I\\
       \delta(j)\ee(i) & : i\in I_j.
     \end{array}
   \right.
\end{displaymath}  We also define the functions $J_0, \ldots, J_l:2^L\to 2^L$ by letting $J_0$ be the identity function and $J_i=I_iJ_{i-1}$. Note that $\ee=J_0\ee$ and $J_l\ee=I\ee$.

Now define $D_Y, E_Y, D_X, E_X:2^L\to [0, \infty)$ by $$D_Y(\ee)= \mathbb{E}_\delta \varrho^f_Y(g(\ee, \delta), Ig(\ee, \delta))^p,$$ $$E_Y(\ee)=l^{p-1}\sum_{i=1}^l \mathbb{E}_\delta \varrho_Y^f(J_{i-1}g(\ee, \delta), J_i g(\ee, \delta))^p,$$  $$D_X(\ee)= \mathbb{E}_\delta \varrho^f_X(g(\ee, \delta), Ig(\ee, \delta))^p,$$ and $$E_X(\ee)=l^{p-1}\sum_{i=1}^l \mathbb{E}_\delta \varrho_X^f(J_{i-1}g(\ee, \delta), J_i g(\ee, \delta))^p.$$

\begin{claim} The functions $D_Y, E_Y, D_X, E_X$ satisfy the hypotheses of Lemma \ref{expect}. 

\label{claim3}
\end{claim}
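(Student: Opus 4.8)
The plan is to verify, one by one, the three groups of hypotheses of Lemma~\ref{expect} for the functions $D_Y, E_Y, D_X, E_X$ on the probability space $\Omega = 2^L$, with the parameters $\lambda, \Theta$ as fixed at the start of the section and $a, b, \nu, \mu$ also as fixed there. The key observation that makes everything work is a \emph{distributional} one: for each fixed $\ee \in 2^L$, the map $\delta \mapsto g(\ee, \delta)$ is a bijection of $2^l$ onto a coset of a subgroup of $2^L$, and the composition $\ee \mapsto (\text{something involving } g(\ee, \cdot))$ followed by averaging over $\ee$ uniformly distributes; concretely, $\varrho^f_Y(J_{i-1} g(\ee,\delta), J_i g(\ee,\delta))$ has, after averaging over both $\ee$ and $\delta$, the same distribution as $\varrho^f_Y(\ee, I_i \ee)$, exactly as in the proof of Claim~\ref{claim1}(iii)--(v). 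This gives the identities $\mathbb{E}_\ee D_Y(\ee) = r_I^p$, $\mathbb{E}_\ee D_X(\ee) = s_I^p$, $\mathbb{E}_\ee E_Y(\ee) = l^{p-1}\sum_{J^-=I} r_J^p$, and $\mathbb{E}_\ee E_X(\ee) = l^{p-1}\sum_{J^-=I} s_J^p$.

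First I would check hypothesis (i), the pointwise inequalities. The bounds $D_Y \le E_Y$ and $D_X \le E_X$ follow from the triangle inequality on $L_p$ applied pointwise in $\ee$: $\varrho^f_Y(g(\ee,\delta), Ig(\ee,\delta)) = \varrho^f_Y(J_0 g(\ee,\delta), J_l g(\ee,\delta)) \le \sum_{i=1}^l \varrho^f_Y(J_{i-1}g(\ee,\delta), J_i g(\ee,\delta))$, then raise to the $p$-th power via H\"older picking up the $l^{p-1}$ factor, then average over $\delta$. The inequalities $D_Y \le \lambda^p D_X$ and $E_Y \le \lambda^p E_X$ are immediate termwise from $\mathrm{Lip}(F) \le \lambda$, which gives $\varrho^f_Y(\cdot,\cdot) \le \lambda \varrho^f_X(\cdot,\cdot)$ pointwise. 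The remaining pointwise bound $D_Y \le (1+\nu)\Theta^p E_X$ is the analogue at the level of a single $\ee$ of Claim~\ref{claim1}(iv): for fixed $\ee$, apply the definition of $b_{p,l}(\mathcal{F}) \le (1+\nu)^{1/p}\Theta$ to the function $f_\ee : 2^l \to X$, $f_\ee(\delta) = f(g(\ee,\delta))$, noting $f_\ee(-\delta) = f(Ig(\ee,\delta))$ and $f_\ee(d_i\delta) = f(I_i g(\ee,\delta))$, so $D_Y(\ee) = \mathbb{E}_\delta \varrho^{f_\ee}_Y(\delta, -\delta)^p \le (1+\nu)\Theta^p l^{p-1}\sum_i \mathbb{E}_\delta \varrho^{f_\ee}_X(\delta, d_i\delta)^p = (1+\nu)\Theta^p E_X(\ee)$.

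Next I would check hypothesis (ii), the three expectation inequalities. Using the distributional identities above, $\mathbb{E}D_Y > (1-\nu)\Theta^p \mathbb{E}E_X$ becomes $r_I^p > (1-\nu)\Theta^p l^{p-1}\sum_{J^-=I} s_J^p$, which is exactly the second inequality in Claim~\ref{claim2}(iii) (recall $l_{j_0+1} = l$). Similarly $\mathbb{E}D_Y > (1-\mu)\mathbb{E}E_Y$ becomes $r_I^p > (1-\mu) l^{p-1}\sum_{J^-=I} r_J^p$, which is Claim~\ref{claim2}(i); and $\mathbb{E}D_X > (1-\mu)\mathbb{E}E_X$ becomes $s_I^p > (1-\mu) l^{p-1}\sum_{J^-=I} s_J^p$, which is Claim~\ref{claim2}(ii). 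Finally hypothesis (iii), $\lambda^p(2\mu/a + 2\nu/b) < \Theta^p(1-\nu)$, is purely a statement about the chosen constants: at the start of the section $\nu$ was chosen so that $\nu/b < (1-\nu)\Theta^p/4\lambda^p$, and $\mu$ was chosen so that $\mu/a < (1-\nu)\Theta^p/4\lambda^p$; adding these gives $2\mu/a + 2\nu/b < (1-\nu)\Theta^p/\lambda^p$, which rearranges to exactly (iii).

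I do not expect a serious obstacle here; the content is really just a careful bookkeeping exercise matching the four functions to the four tree-quantities and citing the appropriate earlier claim. The one point that requires the most care — and the closest thing to a ``hard part'' — is the distributional identity: one must check precisely that $\mathbb{E}_\ee \mathbb{E}_\delta \varrho^f_Y(J_{i-1}g(\ee,\delta), J_i g(\ee,\delta))^p = \mathbb{E}_\ee \varrho^f_Y(\ee, I_i\ee)^p$, which comes down to observing that for each fixed $\delta$, as $\ee$ ranges uniformly over $2^L$, the pair $(J_{i-1}g(\ee,\delta), J_i g(\ee,\delta))$ ranges over all pairs of the form $(\omega, I_i\omega)$ with $\omega$ uniform over $2^L$ (both $J_{i-1}g(\cdot,\delta)$ and the coordinate flips are measure-preserving bijections of $2^L$, and $J_i g(\ee,\delta) = I_i J_{i-1} g(\ee,\delta)$). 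This is the same computation already carried out twice in the proof of Claim~\ref{claim1}, so I would simply invoke it. Thus the proof of Claim~\ref{claim3} amounts to the paragraph: establish the four distributional identities, then (i) from the triangle/H\"older inequalities and $\mathrm{Lip}(F)\le\lambda$ and the single-$\ee$ application of $b_{p,l}$, (ii) from Claim~\ref{claim2}, and (iii) from the choice of $\nu$ and $\mu$.
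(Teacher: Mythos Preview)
Your proposal is correct and follows essentially the same route as the paper's proof: pointwise inequalities from the triangle/H\"older inequalities, $\text{Lip}(F)\leqslant\lambda$, and a single-$\ee$ application of $b_{p,l}(\mathcal{F})\leqslant(1+\nu)^{1/p}\Theta$; the expectation inequalities from the distributional identities $\mathbb{E}D_Y=r_I^p$, $\mathbb{E}E_Y=l^{p-1}\sum_{J^-=I}r_J^p$, etc., together with Claim~\ref{claim2}; and hypothesis~(iii) from the choice of $\mu,\nu,a,b$. The one place where you are slightly less explicit than the paper is the equality $l^{p-1}\sum_i\mathbb{E}_\delta\varrho_X^{f_\ee}(\delta,d_i\delta)^p=E_X(\ee)$ for \emph{fixed} $\ee$: since $E_X(\ee)$ is defined via $J_{i-1},J_i$ rather than $I_i$, this needs the $\delta$-level bijection $\delta\mapsto d_{<i}\delta$ (which the paper spells out), not just the $\ee$-level identity you discuss at the end---but the argument is the same and you clearly have it.
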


\begin{proof} $D_Y\leqslant \lambda^p D_X$ and $E_Y\leqslant \lambda^p E_X$ follow from the fact that $\text{Lip}(F)\leqslant \lambda$.   For a fixed $\delta\in 2^l$ and $\ee\in 2^L$, $$\varrho_Y^f(g(\ee, \delta), Ig(\ee, \delta))^p\leqslant l^{p-1}\sum_{i=1}^l \varrho_Y^f(J_{i-1}g(\ee, \delta), J_ig(\ee, \delta))^p$$ follows from the triangle and H\"{o}lder inequalities. Taking expectations over $\delta$ with $\ee$ held fixed yields that $D_Y\leqslant E_Y$. Replacing $Y$ with $X$ yields that $D_X\leqslant E_X$.   Now fix $\ee\in 2^L$ and define $f_\ee(\delta)=g(\ee, \delta)$.      Define $d_{<i}, d_{\leqslant i}:2^l\to 2^l$ by letting $d_{<i}\delta$ be the member of $2^l$ by replacing $\delta(k)$ with $-\delta(k)$ for each $k<i$ and leaving the other coordinates of $\delta$ unchanged. Let $d_{\leqslant i}=d_id_{<i}$.    Note that $\varrho_X^{f_\ee}(\delta, d_i\delta)^p$ and $\varrho_X^{f_\ee}(d_{<i}\delta, d_{\leqslant i\delta})$ have the same distribution as functions of $\delta\in 2^l$.  Note also that $g(\ee, d_{<i}\delta)=J_{i-1}g(\ee, \delta)$ and $g(\ee, \delta_{\leqslant i}\delta)=J_i g(\ee, \delta)$.    Then \begin{align*} D_Y(\ee) & = \mathbb{E}_\delta\varrho_Y^f(g(\ee, \delta), Ig(\ee, \delta))^p = \mathbb{E}_\delta \varrho_Y^{f_\ee}(\delta, -\delta)^p \\ & \leqslant b_{p,l}(\mathcal{F})^p l^{p-1} \sum_{i=1}^l \mathbb{E}_\delta \varrho_X^{f_\ee}(\delta, d_i\ee)^p = b_{p,l}(\mathcal{F})^pl^{p-1}\sum_{i=1}^l \mathbb{E}_\delta \varrho_X^{f_\ee}(d_{<i}\delta, d_{\leqslant i}\delta)^p \\ & \leqslant (1+\nu)\Theta^p l^{p-1} \sum_{i=1}^l \mathbb{E}_\delta \varrho_X^f(J_{i-1} g(\ee, \delta), J_i g(\ee, \delta))^p = (1+\nu)\Theta^p E_X(\ee). \end{align*} This yields that $D_Y, E_Y, D_X, E_X$ satisfy hypothesis $(i)$ of Lemma \ref{expect}.

For a fixed $\delta\in 2^l$ and $1\leqslant i\leqslant l$,  $\varrho_Y^f(\ee, I\ee)$ and $\varrho_Y^f(g(\ee, \delta), Ig(\ee, \delta))$ have the same distribution as functions of $\ee\in 2^L$, as do $\varrho_Y^f(\ee, I_i\ee)$ and  $\varrho_Y^f(J_{i-1}g(\ee, \delta), J_ig(\ee, \delta))$. The analogous statements hold with each  $Y$ replaced by $X$. By exchanging order of integration of $\ee$ and $\delta$, we see that $$\mathbb{E}_\ee D_Y(\ee)=\mathbb{E}_\ee\mathbb{E}_\delta  \varrho_Y^f(g(\ee, \delta), Ig(\ee, \delta))^p = \mathbb{E}_\delta \mathbb{E}_\ee \varrho^f_Y (g(\ee, \delta), Ig(\ee, \delta))^p = \mathbb{E}_\ee \varrho^f_Y (\ee, I\ee)^p=r_I^p.$$   We similarly deduce that  $\mathbb{E}_\ee E_Y(\ee)=l^{p-1}\sum_{J^-=I} r_J^p$, $\mathbb{E}_\ee D_X(\ee)=s_I^p$, and $\mathbb{E}_\ee E_X(\ee)=l^{p-1}\sum_{J^-=I} s_J^p$. Thus hypothesis $(ii)$ of Lemma \ref{expect} is satisfied because $I$ satisfies the conclusions of Claim \ref{claim2}.

Hypothesis $(iii)$ of Lemma \ref{expect} is satisfied by our chocies of $\mu, a, \nu$, and $b$.

\end{proof}

Now by Lemma \ref{expect}, there exists $\ee_0\in 2^L$ such that $$D_Y(\ee_0)>(1-a)E_Y(\ee_0),$$ $$D_X(\ee_0)>(1-a)E_X(\ee_0),$$ and $$E_Y(\ee_0)\geqslant D_Y(\ee_0)>(1-b)\Theta^p E_X(\ee_0).$$    Now define $h:2^l\to X$ by letting $h(\delta)=f(g(\ee_0, \delta))$.   Let $$r^p=\frac{1}{l}\sum_{i=1}^l \mathbb{E}_\delta \varrho_Y^h( \delta, d_i\delta)^p= E_Y(\ee_0)/l^p$$ and $$s^p=\frac{1}{l}\sum_{i=1}^l \mathbb{E}_\delta \varrho_X^h(\delta,d_i\delta)^p=E_X(\ee_0)/l^p \leqslant (1-b)\Theta^p r^p.$$    Then since $$\mathbb{E}_\delta \varrho^h_Y(\delta, -\delta)^p=D_Y(\ee_0)>(1-a)E_Y(\ee_0)=(1-a)l^{p-1}\sum_{i=1}^l \mathbb{E}_\delta \varrho_Y^h(\delta, d_i\delta)^p$$ and $$\mathbb{E}_\delta \varrho^h_X(\delta, -\delta)^p=D_X(\ee_0)>(1-a)E_X(\ee_0)=(1-a)l^{p-1}\sum_{i=1}^l \mathbb{E}_\delta \varrho_X^h(\delta, d_i\delta)^p,$$ it follows from our choice of $a$ and Theorem \ref{BMW} that for any $\delta_1, \delta_2\in 2^l$, $$\frac{1}{D}\partial (\delta_1, \delta_2) \leqslant \frac{d_Y(h(\delta_1), h(\delta_2))}{r} \leqslant D\partial (\delta_1, \delta_2)$$ and $$\frac{1}{D}\partial (\delta_1, \delta_2) \leqslant \frac{d_X(F\circ h(\delta_1), F\circ h(\delta_2))}{s} \leqslant D\partial (\delta_1, \delta_2).$$   Since $r\geqslant \vartheta s$, this finishes the proof.

\begin{rem}\upshape We observe the following quantitative consequence of the previous proof and our remark from the introduction. If we define $c(\mathcal{F})$ to be the supremum of those $c>0$ such that for each $D>1$ and $n\in \nn$, there exist $F:X\to Y\in \mathcal{F}$, $f:2^n\to X$, and $a,b>0$ such that $b\geqslant ac$ and for each $\ee_1, \ee_2\in 2^n$, $$\frac{a}{D} \partial (\ee_1, \ee_2) \leqslant d_X(f(\ee_1), f(\ee_2)) \leqslant aD \partial (\ee_1, \ee_2)$$ and $$\frac{b}{D}\partial (\ee_1, \ee_2) \leqslant d_Y(F\circ f(\ee_1), F\circ f(\ee_2)) \leqslant bD \partial (\ee_1, \ee_2),$$ then $$\underset{n}{\lim\sup} b_{p,n}(\mathcal{F}) \geqslant c(\mathcal{F}) \geqslant \underset{n}{\lim\sup} b_{p,2^n}(\mathcal{F}) \geqslant 2^{1/p-1}\underset{n}{\lim\sup} b_{p,n}(\mathcal{F}).$$   

\label{gg}

\end{rem}

\section{The quantities $a_n(\mathcal{F})$}

The goal of this section is to prove the implication $(iii)\Rightarrow (i)$ with the additional quantitative information: If $(i)$ is satisfied and $c(\mathcal{F})$ is as defined in Remark \ref{gg}, then $c(\mathcal{F})=\lim\sup_n a_n(\mathcal{F})=\lim_n a_n(\mathcal{F})$.     It is obvious that $c(\mathcal{F})\leqslant \lim\sup_n a_n(\mathcal{F})$, so we establish the following criterion for obtaining the reverse inequality.  The basis of this criterion is to use standard self-improvement arguments for embeddings into $X$ without worsening the scaling factors between the embeddings of the cube into $X$ via some $f$ and the corresponding embedding of the cube into $Y$ via $F\circ f$.

\begin{lemma} Suppose $\lambda=\sup_{F\in \mathcal{F}} \text{\emph{Lip}}(F)\in (0,\infty)$.  If $\Theta>0$ is such that for any $0<\vartheta<\Theta$, $D>1$, and $l\in\nn$, there exist $F:X\to Y\in \mathcal{F}$ and $h:2^l\to X$ such that $\text{\emph{dist}}(F\circ h)\leqslant D$ and $\varrho_Y^h(\ee_1, \ee_2)\geqslant \vartheta d_X^h(\ee_1, \ee_2)$ for all $\ee_1, \ee_2\in 2^l$, then $c(\mathcal{F})\geqslant \Theta$. 

\label{taco bell}

\end{lemma}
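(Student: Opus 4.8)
The plan is to fix $0<\vartheta<\Theta$ and $D>1$, and to produce, for each $n\in\nn$, an $F:X\to Y\in\mathcal F$ and $f:2^n\to X$ together with scaling constants $a,b>0$ witnessing crude factoring with $b\geqslant a\vartheta$ and distortion controlled by $D$; since $\vartheta<\Theta$ is arbitrary this gives $c(\mathcal F)\geqslant\Theta$. The hypothesis already hands us, for any distortion parameter $D_0>1$ and any $l$, a map $h:2^l\to X$ with $\mathrm{dist}(F\circ h)\leqslant D_0$ and $\varrho_Y^h(\ee_1,\ee_2)\geqslant\vartheta\,d_X^h(\ee_1,\ee_2)$. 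The defect is that $h$ itself need not be bi-Lipschitz into $X$, only that its composition with $F$ is; so the lower bound $\varrho_Y^h\geqslant\vartheta\varrho_X^h$ combined with $\mathrm{Lip}(F)\leqslant\lambda$ gives $\vartheta\varrho_X^h\leqslant\varrho_Y^h\leqslant\lambda\varrho_X^h$, i.e.\ $\varrho_X^h$ and $\varrho_Y^h$ are uniformly equivalent, but we have no a priori comparison of $\varrho_X^h$ with the graph metric $\partial$.

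First I would extract, from the map $h$ on a large cube $2^L$ (with $L$ to be chosen and $D_0$ slightly smaller than $D$), a sub-cube structure on which $\varrho_X^h$ itself behaves like the normalized graph metric. The natural tool is Theorem \ref{BMW}: if on some cube $2^l$ the quantity $\mathbb E\varrho_X^h(\ee,-\ee)^p$ is close enough (within a factor $1-a$) to $l^{p-1}\sum_i\mathbb E\varrho_X^h(\ee,d_i\ee)^p$, then $\varrho_X^h$ is bi-Lipschitz-equivalent to $t\partial$ with distortion $\leqslant D_1$, where $t^p=\tfrac1l\sum_i\mathbb E\varrho_X^h(\ee,d_i\ee)^p$. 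So the core step is to run a restriction/averaging argument — of exactly the flavour of Section 3, using the interval-tree machinery and Lemmas \ref{goodX}, \ref{YtoX}, \ref{expect}, or more simply a direct pigeonhole on "flat levels" — to find, inside a sufficiently high-dimensional $h$ on $2^L$, a copy of $2^l$ (a coordinate sub-cube after freezing the remaining coordinates at some $\ee_0$) on which the near-equality hypothesis of Theorem \ref{BMW} holds simultaneously for $\varrho_X^h$ and $\varrho_Y^h$. On that sub-cube set $f=h\circ(\text{the inclusion }2^l\hookrightarrow 2^L)$ with the frozen coordinates; then $d_X(f(\ee_1),f(\ee_2))$ lies in $[\tfrac{s}{D_1},sD_1]\partial$ for the appropriate scalar $s$, and $d_Y(F\circ f(\ee_1),F\circ f(\ee_2))$ lies in $[\tfrac{r}{D_1},rD_1]\partial$ for the appropriate $r$.

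It remains to check the crucial compatibility $r\geqslant\vartheta' s$ for $\vartheta'$ as close to $\vartheta$ as we like (and then relabel $\vartheta'$ as the original $\vartheta$ by shrinking at the outset). This follows because, on the selected sub-cube, $r^p=\tfrac1l\sum_i\mathbb E\varrho_Y^h(\delta,d_i\delta)^p\geqslant\vartheta^p\cdot\tfrac1l\sum_i\mathbb E\varrho_X^h(\delta,d_i\delta)^p=\vartheta^p s^p$, using the pointwise bound $\varrho_Y^h\geqslant\vartheta\varrho_X^h$ supplied by the hypothesis; no loss is incurred here at all, so in fact $r\geqslant\vartheta s$ directly. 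Setting $a=s$, $b=r$ gives the two two-sided estimates with distortion $D_1$ and $b\geqslant a\vartheta$. Finally, since $D>1$ was arbitrary and we may take $D_1<D$ by choosing the near-equality tolerance $a$ small and $L$ large, and since $\vartheta<\Theta$ was arbitrary, we conclude $c(\mathcal F)\geqslant\Theta$.

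The main obstacle I anticipate is the restriction step: guaranteeing that one can pass to a coordinate sub-cube on which \emph{both} $\varrho_X^h$ and $\varrho_Y^h$ are simultaneously near-flat in the sense required by Theorem \ref{BMW}. The single-space version of this is standard (iterate the fact that a non-flat cube loses a definite multiplicative factor in the $\ell_p$–$\ell_1$ comparison, so flatness must occur at most levels), but coordinating it across $X$ and $Y$ at once is precisely where the lemmas of Section 3 — especially Lemma \ref{expect}, which was built to find a single point $\ee_0$ where the $X$-side, the $Y$-side, and the cross-comparison are all simultaneously good — do the work, and I would lean on that lemma essentially verbatim rather than re-derive it.
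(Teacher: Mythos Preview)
Your route is genuinely different from the paper's, and the step where you propose to invoke the Section~3 lemmas ``essentially verbatim'' has a real gap. Lemmas \ref{eta} and \ref{expect} (and the setup of Claim \ref{claim1} in Section~4) require the matched pair of bounds
\[
r_I^p\leqslant(1+\eta)\Theta^p l_{d+1}^{p-1}\sum_{J^-=I}s_J^p
\quad\text{and}\quad
r_{[L]}^p>(1-\eta)\Theta^p\Bigl(\prod l_i^{p-1}\Bigr)\sum s_I^p,
\]
with the \emph{same} $\Theta$ on both sides; in Section~4 the upper bound comes from $b_{p,l_{d+1}}(\mathcal F)\leqslant(1+\eta)^{1/p}\Theta$. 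In the present setting you have no control on $b_{p,n}(\mathcal F)$; the only available upper bound is $r_I\leqslant\lambda s_I$, giving $r_I^p\leqslant\lambda^p l_{d+1}^{p-1}\sum s_J^p$, while the global lower bound you can extract from $\text{dist}(F\circ h)\leqslant D_0$ and $\varrho_Y^h\geqslant\vartheta\varrho_X^h$ uses a constant of order $\vartheta/D_0<\lambda$. The hypotheses do not match, so the lemmas would need to be re-proved with decoupled constants rather than quoted.

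You are also working harder than necessary. The observation you are missing is that both conditions in the hypothesis, $\text{dist}(F\circ h)\leqslant D$ and $\varrho_Y^h\geqslant\vartheta\varrho_X^h$, are \emph{hereditary} under restriction to coordinate sub-cubes. Hence there is nothing to coordinate between $X$ and $Y$: on any sub-cube the $Y$-distortion and the cross-bound come for free, and the sole remaining task is to make $\varrho_X^h$ near-flat. The paper exploits this heredity to recover a spatial-type submultiplicativity argument. Fixing $\vartheta$, it defines $\xi_n$ to be the supremum of $\xi$ such that for all $D>1$ one can find $F,f$ with $\text{dist}(F\circ f)\leqslant D$, $\varrho_Y^f\geqslant\vartheta\varrho_X^f$, and $\mathbb E\varrho_X^f(\ee,-\ee)^2>n\xi^2\sum_i\mathbb E\varrho_X^f(\ee,d_i\ee)^2$. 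Because the two side conditions pass to every sub-cube, the usual two-level decomposition shows $\xi_{mn}\leqslant\xi_m\xi_n$; the hypothesis gives $\xi_n\geqslant\vartheta/\lambda$; together these force $\xi_n\equiv 1$, and Theorem \ref{BMW} applied to $\varrho_X^f$ alone finishes. This sidesteps Section~3 entirely and is precisely the restoration of the submultiplicativity dichotomy that Section~2 pointed out is unavailable in the general operator setting.
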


\begin{proof} First fix $0<\vartheta<\Theta$. For each $l\in\nn$, let  $\xi_n=\xi_n(\mathcal{F})$ be the supremum of those constants $\xi>0$ such that for all $D>1$, there exist $F:X\to Y\in \mathcal{F}$ and $f:2^n\to X$ such that $\text{dist}(F\circ f)\leqslant D$, for each $\ee_1, \ee_2\in 2^n$, $\varrho_Y^f(\ee_1, \ee_2) \geqslant \vartheta d_X^f(\ee_1, \ee_2)$, and $$\mathbb{E}\varrho_X^f(\ee, -\ee)^2>n \xi^2 \sum_{i=1}^n \mathbb{E} \varrho_X^f(\ee, d_i\ee)^2.$$

Let us observe the following facts: \begin{enumerate}[(i)]\item For all $n\in\nn$, $\vartheta/\lambda\leqslant \xi_n\leqslant 1$. \item For all $m,n\in\nn$, $\xi_{mn}\leqslant \xi_m\xi_n$. \end{enumerate}

For the first fact, $\xi_n\leqslant 1$ follows as usual from the triangle and H\"{o}lder inequalities.       By hypothesis, for each $D_1,D_2>1$, there exist $F:X\to Y\in \mathcal{F}$ and $f:2^n\to X$ such that $\text{dist}(F\circ f)\leqslant D=\min\{D_1, D_2\}$ and $\vartheta \varrho_X^f(\ee_1, \ee_2) \leqslant \varrho_Y^f(\ee_1, \ee_2)$ for all $\ee_1, \ee_2\in 2^n$.    Then \begin{align*} \lambda^2 \mathbb{E}\varrho_X^f(\ee,-\ee)^2 & \geqslant \mathbb{E}\varrho_Y^f(\ee_1, -\ee)^2 \geqslant D^{-4}n\sum_{i=1}^n \varrho_Y^f(\ee, d_i\ee)^2 \geqslant \vartheta^2 D^{-4}n\sum_{i=1}^n \varrho_X^f(\ee, d_i\ee)^2. \end{align*}  Now applying this as $D_1\downarrow 1$ with $D_2>1$ held fixed, we deduce that $\xi_n\geqslant \vartheta/\lambda D^2_2$.   Unfixing $D_2>1$ yields that $\xi_n\geqslant \vartheta/\lambda$.

For the second item, suppose $\xi_{mn}>\xi_m\xi_n$ for some $m,n\in\nn$.  Fix $\alpha>\xi_m$ and $\beta>\xi_n$ such that $\alpha\beta<\xi_{mn}$.     By definition of $\xi_m$, there exists $D_1>1$ such that for all $F:X\to Y\in \mathcal{F}$ and $f:2^m\to X$ with $\text{dist}(F\circ f)\leqslant D_1$ and $\vartheta \varrho_X^f(\ee_1, \ee_2)\leqslant \varrho_Y^f(\ee_1, \ee_2)$ for all $\ee_1, \ee_2\in 2^m$, $$\mathbb{E}\varrho_X^f(\ee, -\ee)^2 \leqslant m\alpha^2 \sum_{i=1}^m \mathbb{E}\varrho_X^f(\ee, d_i\ee)^2.$$    Similarly, there exists $D_2>1$ such that for any $F:X\to Y\in \mathcal{F}$ and $f:2^n\to X$ with $\text{dist}(F\circ f)\leqslant D_2$ and $\vartheta \varrho_X^f(\ee_1, \ee_2)\leqslant \varrho_Y^f(\ee_1, \ee_2)$ for all $\ee_1, \ee_2\in 2^n$, $$\mathbb{E}\varrho_X^f(\ee, -\ee)^2 \leqslant n\beta^2\sum_{i=1}^n \mathbb{E}\varrho_X^f(\ee, d_i\ee)^2.$$   Let $D=\min\{D_1, D_2\}>1$ and fix $\xi, F, f$ such that  $\alpha\beta<\xi<\xi_{mn}$ and  $F:X\to Y\in \mathcal{F}$,  $f:2^{mn}\to X$ satisfy $\text{dist}(F\circ f)\leqslant D$, $\vartheta \varrho_X^f(\ee_1, \ee_2)\leqslant \varrho_Y^f(\ee_1, \ee_2)$ for all $\ee_1, \ee_2\in 2^{mn}$, and $$\mathbb{E}\varrho_X^f(\ee, -\ee)^2>mn\xi^2\sum_{i=1}^{mn} \varrho_X^f(\ee, d_i\ee)^2.$$   Now as usual, let $I_1, \ldots, I_m$ be a partition of $[mn]$ into intervals of cardinality $n$ and define \begin{displaymath}
   I_j\ee(i) = \left\{
     \begin{array}{lr}
       \ee(i) & : i\in [mn]\setminus I_j\\
       -\ee(i) & : i\in I_j
     \end{array}
   \right.
\end{displaymath} 

 Define $g:2^{mn}\times 2^m\to 2^{mn}$ be defined by $g(\ee, \delta)(i)=\delta(j)\ee(i)$, where $i\in I_j$.   We identify $\ee$ with $(\ee_i)_{i=1}^m$, where $(\ee((i-1)n+1), \ldots, \ee(in))=\ee_i\in 2^n$. For $\ee\in 2^{mn}$, we let $\ee_{-i}\in 2^{(m-1)n}$ be defined by $\ee_{-i}=(\ee_1, \ldots, \ee_{i-1}, \ee_{i+1}, \ldots, \ee_m)$.     Note that for a fixed $\ee_{-i}$ and a fixed $\ee'\in 2^{mn}$, if $h$ is the map from $2^n$ to $X$ given by $\ee\mapsto f((\ee_1, \ldots, \ee_{i-1}, \ee, \ee_{i+1}, \ldots, \ee_m))$ or if $h$ is the map from $2^m$ to $X$ given by $\delta\mapsto f(g(\ee', \delta))$, then $\text{dist}(F\circ h)\leqslant \text{dist}(F\circ f)\leqslant D$ and $\vartheta \varrho_X^h(\ee_1, \ee_2)\leqslant \varrho_Y^h(\ee_1, \ee_2)$ for all $\ee_1, \ee_2\in 2^n$ (resp. $\ee_1, \ee_2\in 2^m$).     Then \begin{align*} mn\xi^2\sum_{i=1}^{mn} \mathbb{E} \varrho_X^f(\ee, d_i\ee)^2 & < \mathbb{E}\varrho_X^f(\ee, -\ee)^2  =\mathbb{E}_\ee \mathbb{E}_\delta \varrho_X^f(g(\ee, \delta), g(\ee, -\delta))^2 \\ & \leqslant m\alpha^2 \sum_{i=1}^m \mathbb{E}_\ee\mathbb{E}_\delta \varrho_X^f(g(\ee, \delta), g(\ee, d_i\delta))^2  = m\alpha^2 \sum_{i=1}^m \mathbb{E}_\ee\mathbb{E}_\delta \varrho_X^f(g(\ee, \delta), I_i g(\ee, \delta))^2 \\ &  = m\alpha^2 \sum_{i=1}^m \mathbb{E} \varrho_X^f(\ee, I_i\ee)^2 = m \alpha^2\sum_{i=1}^m \mathbb{E}_{\ee_{-i}}\mathbb{E}_{\ee_i} \varrho_X^f(\ee, I_i\ee)^2 \\ & \leqslant mn\alpha^2\beta^2 \sum_{i=1}^m \mathbb{E}_{\ee_{-i}}\sum_{j=(i-1)+1}^{ij} \mathbb{E}_{\ee_i}\varrho_X^f(\ee, d_j\ee)^2  =mn\alpha^2\beta^2\sum_{i=1}^{mn}\mathbb{E}\varrho_X^f(\ee, d_i\ee)^2. \end{align*}  This contradiction yields $(ii)$.

Now since $(\xi_n)_{n=1}^\infty$ is submultiplicative and lies in $[\vartheta/\lambda, 1]$, it must be that $\xi_n=1$ for all $n\in\nn$. Indeed, if $\xi_n<1$, then for large enough $t\in \nn$, $\xi_{n^t}\leqslant \xi_n^t<\vartheta/\lambda$.    Now fix $n\in\nn$ and $D>1$.  By Theorem \ref{BMW}, there exists $0<\mu<1$ such that if $f:2^n\to X$ satisfies $$\mathbb{E}\varrho_X^f(\ee, -\ee)^2>(1-\mu)n\sum_{i=1}^n \mathbb{E}_X^f(\ee, d_i\ee)^2,$$ then $\text{dist}(f)\leqslant D$.    By the definition of $\xi_n$ and since $\xi_n=1>1-\mu$,  there exist $F:X\to Y\in \mathcal{F}$ and $f:2^n\to X$ such that $\text{dist}(F\circ f)\leqslant D$, $\vartheta \varrho_X^f(\ee_1, \ee_2)\leqslant \varrho_Y^f(\ee_1, \ee_2)$ for all $\ee_1, \ee_2\in 2^n$, and $$\mathbb{E}\varrho_X^f(\ee, -\ee)^2>(1-\mu)n\sum_{i=1}^n \mathbb{E}\varrho_X^f(\ee, d_i\ee)^2.$$  From this it follows that $\text{dist}(f)\leqslant D$.   Moreover, if $a,b>0$ are such that $$\frac{a}{D}\partial (\ee_1, \ee_2)\leqslant \varrho_X^f(\ee_1, \ee_2) \leqslant aD\partial (\ee_1, \ee_2)$$ and $$\frac{b}{D}\partial (\ee_1, \ee_2)\leqslant \varrho_Y^f(\ee_1, \ee_2) \leqslant bD\partial (\ee_1, \ee_2)$$ for all $\ee_1, \ee_2$, then $a\vartheta/D\leqslant bD$.  Since $D>1$, $n\in\nn$ are arbitrary, $c(\mathcal{F})\geqslant \vartheta$. Now we unfix $0<\vartheta<\Theta$ and deduce that $c(\mathcal{F})\geqslant \Theta$.

\end{proof}

\begin{proposition} If $\mathcal{F}$ is a uniformly Lipschitz collection of maps, then $\lim_n a_n(\mathcal{F})=\inf_n a_n(\mathcal{F})$. 

\end{proposition}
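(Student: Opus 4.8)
The plan is to show that the sequence $n\mapsto n\,a_n(\mathcal{F})$ is subadditive and then invoke Fekete's lemma. Since $a_n(\mathcal{F})\leqslant\lambda<\infty$ for every $n$, each term $n\,a_n(\mathcal{F})$ is finite and nonnegative, so Fekete's subadditive lemma applied to $b_n:=n\,a_n(\mathcal{F})$ will give $\lim_n b_n/n=\inf_n b_n/n$, which is precisely $\lim_n a_n(\mathcal{F})=\inf_n a_n(\mathcal{F})$. Thus everything reduces to proving the weighted estimate
$$(n+m)\,a_{n+m}(\mathcal{F})\leqslant n\,a_n(\mathcal{F})+m\,a_m(\mathcal{F}).$$

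To prove this I would fix $F:X\to Y\in\mathcal{F}$ and $f:2^{n+m}\to X$, and identify $2^{n+m}$ with $2^n\times 2^m$ by splitting the coordinates into the first $n$ and the last $m$, writing $\ee=(\ee',\ee'')$; under this identification $\mathbb{P}_{n+m}=\mathbb{P}_n\otimes\mathbb{P}_m$. Applying the triangle inequality for $d_Y$ to $F\circ f$ at the points $(\ee',\ee'')$, $(-\ee',\ee'')$, $(-\ee',-\ee'')$ gives, for every $\ee\in 2^{n+m}$,
$$\varrho_Y^f((\ee',\ee''),(-\ee',-\ee''))\leqslant \varrho_Y^f((\ee',\ee''),(-\ee',\ee''))+\varrho_Y^f((-\ee',\ee''),(-\ee',-\ee'')).$$
For fixed $\ee''\in 2^m$ the map $g_{\ee''}:2^n\to X$, $g_{\ee''}(\ee')=f(\ee',\ee'')$, satisfies $\varrho_Y^{g_{\ee''}}(\ee',-\ee')=\varrho_Y^f((\ee',\ee''),(-\ee',\ee''))$, so by the definition of $a_n(\mathcal{F})$ (with the defining inequality holding also at the infimum), $\mathbb{E}_{\ee'}\varrho_Y^f((\ee',\ee''),(-\ee',\ee''))\leqslant a_n(\mathcal{F})\,\text{Lip}(g_{\ee''})$. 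Symmetrically, for fixed $\ee'\in 2^n$ the map $h_{\ee'}:2^m\to X$, $h_{\ee'}(\ee'')=f(-\ee',\ee'')$, yields $\mathbb{E}_{\ee''}\varrho_Y^f((-\ee',\ee''),(-\ee',-\ee''))\leqslant a_m(\mathcal{F})\,\text{Lip}(h_{\ee'})$.

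The one point that requires attention — and the source of the \emph{weighted} rather than plain subadditivity — is the change of normalization between the metrics $\partial_{n+m}$, $\partial_n$, $\partial_m$. If $\ee',\delta'\in 2^n$ differ in $k$ coordinates then $(\ee',\ee'')$ and $(\delta',\ee'')$ differ in $k$ coordinates in $2^{n+m}$, so $d_X(g_{\ee''}(\ee'),g_{\ee''}(\delta'))\leqslant \frac{k}{n+m}\text{Lip}(f)=\frac{n}{n+m}\partial_n(\ee',\delta')\,\text{Lip}(f)$, giving $\text{Lip}(g_{\ee''})\leqslant \frac{n}{n+m}\text{Lip}(f)$, and likewise $\text{Lip}(h_{\ee'})\leqslant \frac{m}{n+m}\text{Lip}(f)$. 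Taking expectations over $(\ee',\ee'')$ in the displayed triangle inequality and integrating in the appropriate order (Fubini) then gives
$$\mathbb{E}_{2^{n+m}}\varrho_Y^f(\ee,-\ee)\leqslant \Bigl(\tfrac{n}{n+m}\,a_n(\mathcal{F})+\tfrac{m}{n+m}\,a_m(\mathcal{F})\Bigr)\text{Lip}(f).$$
Since $F$ and $f$ are arbitrary, this yields $a_{n+m}(\mathcal{F})\leqslant \frac{n\,a_n(\mathcal{F})+m\,a_m(\mathcal{F})}{n+m}$, i.e. the subadditivity of $b_n=n\,a_n(\mathcal{F})$, and Fekete's lemma finishes the proof. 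I do not anticipate any serious obstacle; the only care needed is the bookkeeping of the factors $\tfrac{n}{n+m}$ and $\tfrac{m}{n+m}$ coming from the normalized graph metric, together with the routine verification that the restricted maps $g_{\ee''}$ and $h_{\ee'}$ are genuinely of the form to which the definitions of $a_n(\mathcal{F})$ and $a_m(\mathcal{F})$ apply.
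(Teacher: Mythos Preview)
Your argument is correct. The key weighted inequality $(n+m)a_{n+m}(\mathcal{F})\leqslant n\,a_n(\mathcal{F})+m\,a_m(\mathcal{F})$ follows exactly as you outline: the coordinate split, the triangle inequality in $Y$, and the rescaling factors $\tfrac{n}{n+m}$, $\tfrac{m}{n+m}$ for the Lipschitz constants of the restricted maps all check out, and the defining inequality for $a_n(\mathcal{F})$ does hold at the infimum since $a_n(\mathcal{F})=\sup_{F,f}\mathbb{E}\varrho_Y^f(\ee,-\ee)/\text{Lip}(f)$. Fekete then finishes.

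This is a genuinely different and somewhat slicker route than the paper's. The paper proceeds in two stages: first it shows $a_{kl}(\mathcal{F})\leqslant a_l(\mathcal{F})$ for all $k$ by an isometric block embedding $2^l\hookrightarrow 2^{kl}$ (this is morally the special case of your inequality where the two summands are equal, iterated), and then it handles a general $m>l$ by writing $m=k_ml+r_m$, extending a function on $2^{lk_m}$ to $2^m$ with $r_m$ frozen coordinates, and tracking an additive error $2\lambda r_m/m$ that vanishes as $m\to\infty$. Your approach trades that approximation step for a single clean application of Fekete's lemma; the price is that you do not get the intermediate monotonicity-along-multiples statement $a_{kl}(\mathcal{F})\leqslant a_l(\mathcal{F})$ explicitly, but that statement is not used elsewhere in the paper, so nothing is lost.
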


\begin{proof} Let $\lambda=\sup_{F\in \mathcal{F}}\text{Lip}(F)\in (0, \infty)$.  Fix $k,l\in \nn$. Let $I_1, \ldots, I_l$ be a partition of $[kl]$ into subintervals of cardinality $k$. Let $I_j:2^{kl}\to 2^{kl}$ be such that  \begin{displaymath}
   I_j\ee(i) = \left\{
     \begin{array}{lr}
       \ee(i) & : i\in [kl]\setminus I_j\\
       -\ee(i) & : i\in I_j.
     \end{array}
   \right.
\end{displaymath}  

Define $g:2^{kl}\times 2^l\to 2^{kl}$ by $g(\ee, \delta)(i)=\delta(j)\ee(i)$, where $j$ is such that $i\in I_j$.

  Now fix any $F:X\to Y\in \mathcal{F}$ and $f:2^{kl}\to X$.  Note that for fixed $\ee\in 2^{kl}$, the function $\delta\mapsto g(\ee, \delta)$ is distance preserving.  Therefore for each fixed $\ee\in 2^l$, the function $f_\ee:2^l\to X$ given by $f_\ee(\delta)=f(g(\ee, \delta))$ satisfies $\text{Lip}(f_\ee)\leqslant  \text{Lip}(f)$.    Then \begin{align*} \mathbb{E}_\ee \varrho_Y^f(\ee, -\ee) & = \mathbb{E}_\delta \mathbb{E}_\ee \varrho_Y^f(g(\ee, \delta), g(\ee, -\delta)) = \mathbb{E}_\ee \mathbb{E}_\delta \varrho^{f_\ee}_Y(\delta, -\delta) \\ & \leqslant a_l(\mathcal{F})\mathbb{E}_\ee \text{Lip}(f_\ee)\leqslant a_l(\mathcal{F}) \text{Lip}(f). \end{align*}  From this it follows that $a_{kl}(\mathcal{F})\leqslant a_l(\mathcal{F})$.

Now fix $l\in\nn$.  For $m>l$, write $m=k_ml+r_m$ where $k_m\in\nn$ and $0\leqslant r_m<k$. For the moment, we suppress the subscript $m$ and simply write $k_m=k$ and $r_m=r$.    Now fix $F:X\to Y\in \mathcal{F}$ and $f:2^m\to X$.    Let $\delta=(\delta(1), \ldots, \delta(r))\in 2^r$ be arbitrary and define $g:2^{lk}\to 2^m$ by $g(\ee)=(\ee(1), \ldots, \ee(lk), \delta(1), \ldots, \delta(r))$. Define $h:2^m\to 2^{lk}$ by $h(\ee)=(\ee(1), \ldots, \ee(lk))$.         Note that for each $\ee_1, \ee_2\in 2^{lk}$, $$\frac{1}{lk}\partial(\ee_1, \ee_2)= \frac{1}{m}\partial (g(\ee_1), g(\ee_2)).$$  Therefore the map $G:2^{lk}\to X$ given by $G(\ee)=f(g(\ee))$ has $\text{Lip}(G)\leqslant \frac{m}{lk}\partial (\ee_1, \ee_2)$. Define $H:2^m\to X$ by $H(\ee)=G(h(\ee))=f(g(h(\ee)))$.  Let us also note that $\mathbb{E}\varrho_Y^G(\ee, -\ee)=\mathbb{E}\varrho_Y^H(\ee, -\ee)$.  For any $\ee\in 2^m$, since $\ee$ and $g(h(\ee))$ differ in at most $r$ coordinates, $$\varrho_Y^f(\ee, g(h(\ee))) \leqslant \lambda \text{Lip}(f)r/m.$$    Therefore \begin{align*} \mathbb{E}\varrho_Y^f(\ee, -\ee) & \leqslant \mathbb{E}\varrho_Y^f(g(h(\ee)), g(h(-\ee)) + \mathbb{E}\varrho_Y^f(\ee, g(h(\ee))+ \mathbb{E}\varrho_Y^f(g(h(-\ee)), -\ee) \\ &  \leqslant  \mathbb{E}\varrho_Y^f(g(h(\ee)), g(h(-\ee))+ 2\lambda \text{Lip}(f)r/m  = \mathbb{E}\varrho_Y^H (\ee, -\ee)+2\lambda \text{Lip}(f)r/m \\ & = \mathbb{E}\varrho_Y^G(\ee, -\ee)+2\lambda \text{Lip}(f)r/m  \leqslant a_{lk}(\mathcal{F})\text{Lip}(G)+2\lambda \text{Lip}(f)r/m \\ & \leqslant \Bigl[a_l(\mathcal{F})\frac{m}{lk}+\frac{2\lambda r}{m}\Bigr]\text{Lip}(f). \end{align*}    From this it follows that $$a_m(\mathcal{F}) \leqslant a_l(\mathcal{F})\frac{m}{lk}+\frac{2\lambda r}{m}.$$  Now once more writing $k=k_m$ and $r=r_m$ and noting that $\frac{m}{lk_m}\to 1$ and $\frac{2\lambda r_m}{m}\to 0$ as $m\to \infty$, we deduce that $a_l(\mathcal{F})\geqslant \lim\sup_m a_m(\mathcal{F})$. Since $l\in \nn$ was arbitrary, we are done.

\end{proof}

\begin{rem}\upshape If $L,l\in\nn$ are two natural numbers, $g:2^L\times 2^l\to 2^L$ is a function such that for each $\delta\in 2^l$, $\ee\mapsto g(\ee, \delta)$ is a bijection, and $\Omega\subset 2^L$ is such that $\mathbb{P}(\Omega)<1/2^l$, then there exists $\ee\in 2^L$ such that $\{g(\ee, \delta):\delta\in 2^l\}\subset \Omega^c$. Indeed,  for each $\ee_0\in 2^L$ and $\delta_0\in 2^l$,  define $\Omega_{\ee_0}=\{\delta\in 2^l: g(\ee_0, \delta)\in \Omega\}$,  $\Omega^{\delta_0}=\{\ee\in 2^L: g(\ee, \delta_0)\in \Omega\}$, and $\Omega_1=\{(\ee, \delta)\in 2^L\times 2^l: g(\ee, \delta)\in \Omega\}$. Then if for each $\ee$, there exists $\delta\in 2^l$ such that $g(\ee, \delta)\in \Omega^c$,  \begin{align*} \mathbb{P}(\Omega)=\frac{1}{2^l}\sum_{\delta\in 2^l}\mathbb{P}(\Omega)=\frac{1}{2^l}\sum_{\delta\in 2^l} \mathbb{P}(\Omega^\delta) = \mathbb{P}(\Omega_1)  = \frac{1}{2^L}\sum_{\ee\in 2^L}\mathbb{P}(\Omega_\ee) \geqslant \frac{1}{2^L}\sum_{\ee\in 2^L}1/2^l = 1/2^l.\end{align*}

\label{density}
\end{rem}

Let us also recall the following simple consequence of the reverse triangle inequality, which we use as a substitute for Theorem \ref{BMW} in this section. 

\begin{proposition} For each $l\in\nn$ and $D>1$, there exists $0<a<1$ such that if $(Z, d_Z)$ is any metric space and $h:2^l\to Z$ is a map  such that $(1-a)\text{\emph{Lip}}(h) < \min_{\delta\in 2^l} d_Z(h(\delta), h(-\delta))$, then $h$ is an embedding with distortion not more than $D$.

\label{sharp}
\end{proposition}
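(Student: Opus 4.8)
The plan is to prove this by a direct, elementary argument using the reverse triangle inequality together with a compactness/counting argument on the finite set $2^l$. First I would observe that it suffices to find, for the given $l$ and $D$, a single $a \in (0,1)$ that works uniformly in $(Z,d_Z)$ and $h$; since $2^l$ is finite, one can normalize by assuming $\mathrm{Lip}(h)=1$, so that $d_Z(h(\delta),h(\delta')) \le \partial(\delta,\delta')$ for all $\delta,\delta' \in 2^l$, and the hypothesis reads $d_Z(h(\delta),h(-\delta)) > 1-a$ for every $\delta \in 2^l$. The upper distortion bound $d_Z(h(\ee_1),h(\ee_2)) \le \partial(\ee_1,\ee_2) \le D\,\partial(\ee_1,\ee_2)$ is then immediate, and the whole content is the lower bound $d_Z(h(\ee_1),h(\ee_2)) \ge \frac{1}{D}\partial(\ee_1,\ee_2)$.

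The key step is to bound $d_Z(h(\ee_1),h(-\ee_1))$ above along a geodesic in the cube. Fix $\ee_1 \in 2^l$, and write $k = |\{i : \ee_1(i) \ne \ee_2(i)\}|$, so $\partial(\ee_1,\ee_2) = k/l$ and $\partial(\ee_2,-\ee_1) = (l-k)/l$. Then by the triangle inequality and the $1$-Lipschitz bound,
\[
1 - a < d_Z(h(\ee_1),h(-\ee_1)) \le d_Z(h(\ee_1),h(\ee_2)) + d_Z(h(\ee_2),h(-\ee_1)) \le d_Z(h(\ee_1),h(\ee_2)) + \frac{l-k}{l},
\]
so that $d_Z(h(\ee_1),h(\ee_2)) > \frac{k}{l} - a = \partial(\ee_1,\ee_2) - a$. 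Now one needs to convert the additive deficit $-a$ into a multiplicative factor $1/D$. When $\ee_1 \ne \ee_2$ we have $\partial(\ee_1,\ee_2) \ge 1/l$, so $\partial(\ee_1,\ee_2) - a \ge \partial(\ee_1,\ee_2)(1 - al)$; choosing $a$ small enough that $1 - al \ge 1/D$, i.e.\ $a \le \frac{1}{l}\bigl(1 - \frac1D\bigr)$ (and $a<1$), gives $d_Z(h(\ee_1),h(\ee_2)) \ge \frac{1}{D}\partial(\ee_1,\ee_2)$. Combined with the trivial upper bound, this shows $h$ is an embedding with distortion at most $D$.

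I do not expect a serious obstacle here: the statement is genuinely a one-line consequence of the reverse triangle inequality once the normalization $\mathrm{Lip}(h)=1$ is made and the geodesic decomposition $\partial(\ee_1,\ee_2) + \partial(\ee_2,-\ee_1) = 1$ is used. The only point requiring a little care is the passage from the additive to the multiplicative lower bound, which exploits crucially that $2^l$ is a \emph{finite} metric space with minimal nonzero distance $1/l$ — this is exactly why the proposition is stated for fixed $l$ and why $a$ is allowed to depend on $l$. One should also record the explicit admissible value $a = \min\{\tfrac12, \tfrac{1}{l}(1 - \tfrac1D)\}$, which will be convenient for the quantitative applications in this section.
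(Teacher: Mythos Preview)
Your proposal is correct and is essentially identical to the paper's own proof: both normalize (implicitly or explicitly) by $\text{Lip}(h)$, apply the reverse triangle inequality via $d_Z(h(\ee_1),h(-\ee_1)) \leqslant d_Z(h(\ee_1),h(\ee_2)) + d_Z(h(\ee_2),h(-\ee_1))$, and then convert the additive deficit $-a$ into the multiplicative factor $1/D$ using $\partial(\ee_1,\ee_2)\geqslant 1/l$ and the choice $1-al\geqslant 1/D$. The only cosmetic difference is that the paper carries $\text{Lip}(h)$ through rather than normalizing; your mention of a ``compactness/counting argument'' is unnecessary, as the proof is entirely direct.
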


\begin{proof} Fix $0<a<1/l$ so small that $1-al>1/D$.     Fix $\delta\neq \delta_1\in 2^l$ and let $m=l \partial (\delta, \delta_1)$.  Then \begin{align*} d_Z(h(\delta), h(\delta_1)) & \geqslant d_Z(h(\delta), h(-\delta))-d_Z(h(-\delta), h(\delta_1)) \geqslant (1-a)\text{Lip}(h)-\frac{l-m}{l}\text{Lip}(h) \\ & = \Bigl(\frac{m}{l}-a\Bigr)\text{Lip}(h) \geqslant \frac{m}{l}(1-al)\text{Lip}(h) > \frac{\text{Lip}(h)}{D} \partial (\delta, \delta_1).   \end{align*}

From this it follows that $\text{Lip}(h^{-1})\leqslant D/\text{Lip}(h)$, so $\text{Lip}(h)\text{Lip}(h^{-1})\leqslant D$.

\end{proof}

We next recall the concentration of measure for the Hamming cube. 

\begin{lemma}\cite{Harper,AM} There exist constants $\alpha, \beta>0$ such that for any $n\in\nn$ and $\lambda_1>0$, if $\Phi:2^n\to \rr$ is $\lambda_1$-Lipschitz and if $\phi$ is a median of $\Phi$, then for any $t>0$,  $$\mathbb{P}\Bigl(|\Phi-\phi|>t\lambda_1\Bigr)\leqslant \alpha \exp(-\beta t n).$$

\label{concentrate}

\end{lemma}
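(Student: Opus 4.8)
The plan is to reduce to $\lambda_1=1$ by homogeneity, to note that the image $\Phi(2^n)\subseteq\rr$ has diameter at most the $\partial_n$-diameter of $2^n$, which is $1$, so that $|\Phi-\phi|\leqslant\lambda_1$ always and the assertion is trivial for $t\geqslant 1$; then for $t\in(0,1)$ the content is a standard concentration estimate on the Hamming cube. I would present it via a martingale, which is self-contained, and remark on the isoperimetric route, which is what the citation \cite{Harper} refers to.

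First route. Let $X=(X_1,\ldots,X_n)$ be uniformly distributed on $2^n$ and form the Doob martingale $M_k=\mathbb{E}[\Phi(X)\mid X_1,\ldots,X_k]$, so that $M_0=\mathbb{E}\Phi$ and $M_n=\Phi(X)$. Since $\Phi$ is $1$-Lipschitz for $\partial_n$ and a single coordinate flip moves a point of $2^n$ by $\partial_n$-distance $1/n$, flipping one coordinate changes the value of $\Phi$ by at most $1/n$; averaging over the remaining coordinates gives $|M_k-M_{k-1}|\leqslant 1/n$ almost surely. Azuma's inequality then yields $\mathbb{P}(|\Phi-\mathbb{E}\Phi|>s)\leqslant 2\exp(-s^2 n/2)$ for every $s>0$. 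To pass from the mean to the median $\phi$, the same tail bound applied at a level $s_0\asymp n^{-1/2}$ forces $|\phi-\mathbb{E}\Phi|\lesssim n^{-1/2}$, and substituting this back produces a bound of exactly the stated form $\mathbb{P}(|\Phi-\phi|>t\lambda_1)\leqslant\alpha\exp(-\beta t n)$ (after absorbing the trivial range $t\geqslant 1$ into the constants).

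Second route. Put $A^{+}=\{\Phi\leqslant\phi\}$ and $A^{-}=\{\Phi\geqslant\phi\}$, both of probability at least $1/2$. Because $\Phi$ is $1$-Lipschitz, on the $t$-enlargement $(A^{+})_t$ (with respect to $\partial_n$) one has $\Phi\leqslant\phi+t$, and symmetrically $\Phi\geqslant\phi-t$ on $(A^{-})_t$; hence $\{|\Phi-\phi|>t\lambda_1\}\subseteq((A^{+})_t)^c\cup((A^{-})_t)^c$. Harper's vertex-isoperimetric inequality says Hamming balls minimize enlargements among sets of a given cardinality, so $\mathbb{P}(((A^{\pm})_t)^c)$ is at most the measure of the complement of a ball of radius slightly more than $n/2$, which by the Chernoff bound for $\mathrm{Bin}(n,\tfrac12)$ is exponentially small in $n$. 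A union bound finishes the argument.

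I do not expect a real obstacle here, as this is a classical fact; in either route the only point requiring care is the bookkeeping forced by the normalized metric $\partial_n$ (the factor $1/n$ in the martingale increment, respectively the radius $\approx tn$ of the enlargement) and the routine transfer between the mean and the median. The substantive input, if one follows the second route, is Harper's isoperimetric inequality, which I would simply invoke rather than reprove; the first route relies only on Azuma's inequality and is entirely elementary.
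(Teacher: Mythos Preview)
The paper does not supply a proof of this lemma; it simply records it as a known concentration inequality with citations to Harper and Amir--Milman. So there is no ``paper's proof'' to compare against, and your two routes (Azuma on the Doob martingale, or Harper's isoperimetry plus a Chernoff tail) are exactly the standard arguments one would give.

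There is, however, a genuine gap in your write-up, and it stems from the fact that the exponent in the lemma is almost certainly a typo. Your Azuma computation correctly produces
\[
\mathbb{P}\bigl(|\Phi-\mathbb{E}\Phi|>t\bigr)\leqslant 2\exp(-t^{2}n/2),
\]
and the isoperimetric route gives the same sub-Gaussian shape. You then assert that this ``produces a bound of exactly the stated form $\alpha\exp(-\beta t n)$ after absorbing the trivial range $t\geqslant 1$ into the constants.'' That step is false: for $t<1$ one has $t^{2}<t$, so $\exp(-\beta t^{2}n)$ is \emph{weaker} than $\exp(-\beta t n)$, and no choice of constants converts the former into the latter uniformly in $n$ and $t$. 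In fact the lemma as printed cannot hold: take $\Phi(\ee)=n^{-1}\sum_{i=1}^{n}\ee(i)$, which is $2$-Lipschitz for $\partial_n$ with median $0$, and set $t=c/\sqrt{n}$; by the central limit theorem $\mathbb{P}(|\Phi|>2t)$ stays bounded away from $0$ as $n\to\infty$, while $\alpha\exp(-\beta t n)=\alpha\exp(-\beta c\sqrt{n})\to 0$.

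The correct inequality has $t^{2}n$ in the exponent, and that is what your arguments actually establish. This discrepancy is harmless for the paper's application in Section~5, where $t$ is chosen once (depending only on $l,D,\Theta,\lambda$) and then $k$ is sent to infinity, so either exponent suffices. You should state and prove the sub-Gaussian version and, if you wish, remark that the linear exponent in the paper is a misprint that does not affect anything downstream.
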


\begin{proof}[Proof of $(iii)\Rightarrow (i)$]  Let $\lambda=\sup_{F\in\mathcal{F}}\text{Lip}(F)\in (0,\infty)$ and $\Theta=\lim_n a_n(\mathcal{F})\in (0, \lambda]$. Fix $0<\vartheta<\Theta$. Fix $l\in\nn$ and $D>1$ such that $\vartheta<\Theta/D$.  Let $0<a<1$ be chosen according to Proposition \ref{sharp}.  Now fix $0<\mu<1/2$ such that $\frac{1+\mu}{1-\mu}>1-a$ and $\frac{(1-2\mu)}{D}\Theta>\vartheta$.   Fix $0<\eta<\mu$ such that $$\frac{1-\mu}{l}+(1+\eta)\bigl(\frac{l-1}{l}\bigr)<1-\eta.$$    Fix $t>0$ such that $t<\mu\Theta/l$ and $$(l+1)t< \mu(1-2\mu)\Theta.$$  Note that the second inequality implies that for any $M\geqslant (1-2\mu)\Theta$, $$(l+1)t+M\leqslant (1+\mu)M.$$     Fix  $k\in\nn$ so large that for all $m\geqslant k$, $a_m(\mathcal{F})\in ((1-\eta)\Theta, (1+\eta)\Theta)$, $\lambda \alpha \exp(-\beta tm/8\lambda)<t/4\lambda$, and $(l+1)\alpha \exp(-\beta t m/8\lambda)<1/2^l$.   Fix $F:X\to Y\in \mathcal{F}$ and $f:2^{lk}\to X$ such that $$\mathbb{E}\varrho_Y^f(\ee, -\ee)>(1-\eta)\Theta \text{Lip}(f).$$  Let $T$ be the $(l,k)$ interval tree and for each $I\in T$, define $\Phi_I:2^{lk}\to \rr$ by $\Phi_I(\ee)=\varrho_Y^f(\ee, I\ee)$, where $I\ee$ is obtained by changing the signs of the coordinates of $\ee$ which lie in $I$ and leaving the other coordinates unchanged.     Let $\phi_I$ be a median of $\Phi_I$ and let $r_I=\mathbb{E}\Phi_I$. For the remainder of the proof, fix a partition of $[lk]$ into intervals $I_1, \ldots, I_l$, where $I_j=\{(j-1)k+1, \ldots, jk\}$.

We first claim that $$(1-\eta)\Theta \text{Lip}(f) <r_{[lk]}< (1+\eta)\Theta \text{Lip}(f)$$ and for each $I\in \Lambda_1$, $$(1-\mu)\Theta \text{Lip}(f)\leqslant  l r_I \leqslant (1+\eta)\Theta \text{Lip}(f).$$  The first pair of inequalities follows from the fact that $r_{[lk]}=\mathbb{E}\varrho_Y^f(\ee, -\ee)\leqslant a_{lk}(\mathcal{F})\text{Lip}(f)< (1+\eta)\Theta \text{Lip}(f)$ and $F,f$ were chosen such  that $(1-\eta)\Theta \text{Lip}(f)<\mathbb{E}\varrho_Y^f(\ee, -\ee)$.    Now fix $1\leqslant j\leqslant l$ and define $g:2^{lk}\times 2^k\to 2^{lk}$ by letting  \begin{displaymath}
   g(\ee, \delta)(i) = \left\{
     \begin{array}{lr}
       \ee(i) & : i\in [lk]\setminus I_j\\
       \delta(m)\ee(i) & : i=(j-1)k+m.
     \end{array}
   \right.
\end{displaymath}  Note that for a fixed $\ee\in 2^{lk}$, the map $f_\ee:2^l\to 2^{lk}$ given by $f_\ee(\delta)\mapsto g(\ee, \delta)$ scales distances  by a factor of $1/l$. From this it follows that for a fixed $\ee$, the map  $\delta\mapsto f(g(\ee, \delta))$ has Lipschitz constant not more than $\text{Lip}(f)/l$.    Therefore \begin{align*} r_{I_j} & = \mathbb{E}_\ee\varrho_Y^f(\ee, I_j\ee) = \mathbb{E}_\ee \mathbb{E}_\delta \varrho_Y^f(g(\ee, \delta), g(\ee, -\delta)) \leqslant a_k(\mathcal{F})\mathbb{E}_\ee\text{Lip}(f_\ee) \leqslant (1+\eta)\Theta \text{Lip}(f)/l. \end{align*}   From this we deduce that $$\max_{I\in \Lambda_1} lr_I \leqslant (1+\eta)\Theta\text{Lip}(f).$$   To see that $(1-\mu)\Theta \text{Lip}(f) \leqslant lr_I$ for all $I\in \Lambda_1$, suppose that there exists $I_0\in \Lambda_1$ such that $r_{I_0}<(1-\mu)\Theta \text{Lip}(f)/l$.   Then \begin{align*} (1-\eta)\Theta \text{Lip}(f) & < r_{[lk]}  \leqslant \sum_{I\in \Lambda_1} r_I <(1-\mu)\Theta \text{Lip}(f)/l+(l-1)(1+\eta)\Theta \text{Lip}(f)/l \\ & = \Bigl[\frac{1-\mu}{l}+ (1+\eta)\bigl(\frac{l-1}{l}\bigr)\Bigr]\Theta \text{Lip}(f) < (1-\eta)\Theta \text{Lip}(f).\end{align*} This is a contradiction and yields the remaining inequality.  Here we are using the fact that $r_{[lk]}\leqslant \sum_{I\in \Lambda_1} r_I$, which follows from the triangle inequality as in the proof from the previous section.

Let $\Upsilon_I=(|\Phi_I-\phi_I|>t\text{Lip}(f)/4)$ and $\Omega_I=(|r_I-\phi_I|>t\text{Lip}(f))$.    We claim that $\Phi_I$ is $2\lambda\text{Lip}(f)$-Lipschitz taking values in $[0, \lambda\text{Lip}(f)]$, so $\mathbb{P}(\Upsilon_I)\leqslant t/4\lambda$, $|\phi_I-r_I|\leqslant t/2$, and $\mathbb{P}\Big(\bigcup_{I\in T\setminus \Lambda_2}\Omega_I\Bigr)<1/2^l$. Since $\text{diam}(2^{lk})=1$ and $\text{Lip}(F\circ f)\leqslant \lambda\text{Lip}(f)$, we deduce that $\Phi_I$ takes values in $[0,\lambda\text{Lip}(f)]$.   Next let us show that $\Phi_I$ is $2\lambda\text{Lip}(f)$-Lipschitz.   Fix $\ee_1, \ee_2\in 2^{lk}$ and note that \begin{align*} \varrho_Y^f(\ee_1, I\ee_1) - \varrho_Y^f(\ee_2, I\ee_2) & \leqslant \varrho_Y^f(\ee_1, \ee_2)+\varrho_Y^f(\ee_2, I\ee_2)+\varrho_Y^f(I\ee_2, I\ee_1)-\varrho_Y^f(\ee_2, I\ee_2) \\ & = \varrho_Y^f(\ee_1, \ee_2)+ \varrho_Y^f(I\ee_1, I\ee_2)= 2\varrho_Y^f(\ee_1, \ee_2) \leqslant 2\lambda\text{Lip}(f) \partial(\ee_1, \ee_2).  \end{align*} By symmetry, we deduce that $\Phi_I$ is $2\lambda\text{Lip}(f)$-Lipschitz.    From this it follows that $$\mathbb{P}(\Upsilon_I) = \mathbb{P}\Bigl(|\Phi_I-\phi_I|> \frac{t}{8\lambda}(2\lambda \text{Lip}(f))\Bigr) \leqslant \alpha \exp(-\beta t k/8\lambda)<t/4\lambda.$$   Therefore \begin{align*} |\phi_I-r_I| & \leqslant \mathbb{E}1_{\Upsilon_I}|\phi_I-\Phi_I| + \mathbb{E}1_{\Upsilon_I^c}|\phi_I-\Phi_I|  \leqslant \lambda\text{Lip}(f)\mathbb{P}(\Upsilon_I)+ t\text{Lip}(f)/4 \leqslant t\text{Lip}(f)/2. \end{align*}  Therefore $\Omega_I\subset \Upsilon_I$ and $$\mathbb{P}\Bigl(\bigcup_{I\in T\setminus \Lambda_2} \Omega_I\Bigr) \leqslant \sum_{I\in T\setminus \Lambda_2} \mathbb{P}(\Upsilon_I) \leqslant (l+1)\alpha \exp(-\beta tk/8\lambda)<1/2^l.$$  From this and Remark \ref{density}, we may define $g:2^{lk}\times 2^l\to 2^{lk}$ by $g(\ee, \delta)(i)=\delta(j)\ee(i)$ when $i\in I_j$ and choose $\ee_0\in 2^{lk}$ such that $\{g(\ee_0, \delta): \delta\in 2^l\}\subset \bigcap_{I\in T\setminus \Lambda_2} \Omega_I^c$.

  Now define $h:2^l\to X$ by $h(\delta)=f(g(\ee_0, \delta))$.      Note that  \begin{align*} (1-\mu) l\max_{i\in [l], \delta\in 2^l} \varrho_Y^h(\delta, d_i\delta) & \leqslant \frac{1-\eta}{1+\eta}\cdot l\max_{I\in \Lambda_1, \delta\in 2^l} \Phi_I(g(\ee_0, \delta)) \leqslant lt\text{Lip}(f)+ \frac{1-\eta}{1+\eta}\cdot l r_I \\ &  \leqslant lt\text{Lip}(f)+(1-\eta)\Theta \text{Lip}(f)  \leqslant lt\text{Lip}(f)+r_{[lk]} \\ & \leqslant (l+1)t\text{Lip}(f)+\min_{\delta\in 2^l}\Phi_{[lk]}(g(\ee_0,\delta)) \\ & < (1+\mu)\min_{\delta\in 2^l} \Phi_{[lk]}(g(\ee_0,\delta)) = (1+\mu)\min_{\delta\in 2^l}\delta_Y^h(\delta, -\delta).   \end{align*} Here we are using the fact that since $$M \text{Lip}(f):=\min_{\delta\in 2^l} \Phi_{[lk]}(g(\ee_0, \delta)) \geqslant r_{[lk]} -t\text{Lip}(f) \geqslant ((1-\mu)\Theta - t)\text{Lip}(f) \geqslant (1-2\mu)\Theta \text{Lip}(f),$$ it follows from our choice of $t$ that $$(l+1)t\text{Lip}(f) +M\text{Lip}(f) < (1+\mu)M \text{Lip}(f).$$    Now our choice of $a$ and $\mu$ combined with Proposition \ref{sharp} yield that $\text{dist}(F\circ h)\leqslant D$.

We next show that for any $\delta_1, \delta_2\in 2^l$, $\vartheta \varrho_X^h(\delta_1, \delta_2) \leqslant \varrho_Y^f(\delta_1, \delta_2)$.   First observe that the map $\delta\mapsto g(\ee_0, \delta)$ is length preserving, so $\text{Lip}(h)\leqslant \text{Lip}(f)$. Therefore $$\varrho_X^h(\delta_1, \delta_2) \leqslant \text{Lip}(f) \partial(\delta_1, \delta_2).$$   Now let us observe that \begin{align*} \text{Lip}(F\circ h) & = \max_{i\in [l], \delta\in 2^l} l\varrho_Y^h(\delta, d_i\delta)\geqslant \min_{I\in \Lambda_1, \delta\in 2^l} l\Phi_I(g(\ee_0, \delta)) \\ & \geqslant \min_{I\in \Lambda_1} lr_I - lt\text{Lip}(f) \geqslant (1-\mu)\Theta \text{Lip}(f) -lt\text{Lip}(f)\geqslant (1-2\mu)\Theta \text{Lip}(f). \end{align*}    Since $h$ has distortion at most $D$, for any $\delta_1, \delta_2\in 2^l$, $$\varrho_Y^h(\delta_1, \delta_2) \geqslant  \frac{(1-2\mu)}{D}\cdot \Theta \text{Lip}(f) \partial(\delta_1, \delta_2) \geqslant  \vartheta \text{Lip}(f) \partial(\delta_1, \delta_2)\geqslant \vartheta\varrho_X^h(\delta_1, \delta_2).$$   An appeal to Lemma \ref{taco bell} finishes the proof.

\end{proof}

\end{document}